\documentclass[12pt]{amsart}
\usepackage{amsmath, amsthm, amssymb}
\usepackage[initials]{amsrefs}

\title{On VC-minimal fields and dp-smallness}
\author{Vincent Guingona}
\address{University of Notre Dame \\ Department of Mathematics \\ 255 Hurley, Notre Dame, IN 46556}
\email{guingona.1@nd.edu}
\date{\today}
\thanks{2010 \emph{Mathematics Subject Classification}. 03C60, 03C45, 12J15. \\ 
        \indent \emph{Key words and phrases}. Convexly orderable, VC-minimal, ordered fields. \\
        \indent The author was supported by NSF grant DMS-0838506.}

\newtheorem{thm}{Theorem}[section]
\newtheorem{cor}[thm]{Corollary}
\newtheorem{lem}[thm]{Lemma}
\newtheorem{prop}[thm]{Proposition}

\newtheorem{conj}[thm]{Conjecture}

\theoremstyle{remark}
\newtheorem{rem}[thm]{Remark}

\theoremstyle{definition}
\newtheorem{defn}[thm]{Definition}

\newcommand{\Th}{\mathrm{Th} }

\newcommand{\PP}{\mathrm{PP}}
\newcommand{\PPt}{\widetilde{\mathrm{PP}}}

\begin{document}

\begin{abstract}
 In this paper, we show that VC-minimal ordered fields are real closed.  We introduce a notion, strictly between convexly orderable and dp-minimal, that we call dp-small, and show that this is enough to characterize many algebraic theories.  For example, dp-small ordered groups are abelian divisible and dp-small ordered fields are real closed.
\end{abstract}

\maketitle

\section*{Introduction}

Recently, model theorists have been working on using the progress in stability theory as a template for work in unstable theories.  Since much of modern mathematics is done outside the stable world, it seems reasonable to explore such avenues.

The notion of a good ``minimality'' condition comes up frequently in stability theory, and there have been many useful suggestions for a suitable ``minimality'' condition in the unstable context.  S. Shelah developed dp-minimality, which was subsequently studied extensively by many others \cites{dgl, goodrick, ou, simon}.  Another property, strictly stronger than dp-minimality, that was extensively studied is weak o-minimality \cite{mms}.  In \cite{adl}, H. Adler introduces the notion of VC-minimality, which sits strictly between weak o-minimality and dp-minimality.  This too has been studied a great deal recently \cites{cs, fg, fg2, gl}.  In \cite{gl}, this author and M. C. Laskowksi develop a new ``minimality'' notion called ``convex orderability,'' which sits strictly between VC-minimality and dp-minimality.

When turned toward specific classes of theories, these minimality properties can yield strong classification results.  For example, Theorem 5.1 of \cite{mms} asserts that every weakly o-minimal ordered group is abelian divisible and Theorem 5.3 of \cite{mms} says that every weakly o-minimal ordered field is real closed.  For another example, Proposition 3.1 of \cite{simon} yields that every dp-minimal group is abelian by finite exponent and Proposition 3.3 of \cite{simon} states that every dp-minimal ordered group is abelian.  In a similar spirit, J. Flenner and this author show, in \cite{fg2}, that all convexly orderable ordered groups are abelian divisible.

The goal of this paper is two-fold.  In the first part of this paper, we introduce a new ``minimality'' condition that we call ``dp-smallness,'' which fits strictly between convex orderability and dp-minimality.  We then show that most of the results of \cite{fg2} work when we replace ``convexly orderable'' with ``dp-small.''  The second part of the paper is devoted to answering, in the affirmative, Open Question 3.7 of \cite{fg2}.  That is, we show that every convexly orderable ordered field is real closed.  Stronger than that, we actually show this for dp-small ordered fields.

\subsection*{Notation}

Throughout this paper, let $T$ be a complete theory in a language $L$ with monster model $\mathcal{U}$.  We will use $x$, $y$, $z$, etc. to stand for tuples of variables (instead of the cumbersome $\overline{x}$ or $\vec{x}$).  For any $A \subseteq \mathcal{U}$ and tuple $x$, let $A_x$ denote the set of all $|x|$-tuples from $A$ (so $A_x = A^{|x|}$).  If $|x| = 1$, we will say that $x$ is of the \emph{home sort}.\footnote{We could also consider theories with multiple sorts, but for the purposes of this paper, we will need a single ``home sort.''}  For a formula $\varphi(x)$ and $A \subseteq \mathcal{U}$, let
\[
 \varphi(A) = \{ a \in A_x : \mathcal{U} \models \varphi(a) \}.
\]
For ordered groups $G$, let $G_+$ denote the set of positive elements of $G$.  Similarly define $F_+$ for ordered fields $F$.  For a dense ordered group $G$, let $\overline{G}$ denote the completion of $G$ (in the sense of the ordering).  For valued fields $(F, v, \Gamma)$ (where $v : F^{\times} \rightarrow \Gamma$ is the valuation), for $a, b \in F$, let $a | b$ hold if and only if $v(a) \le v(b)$.  

\subsection*{Outline}

In Section \ref{Sec_DefRes}, we give all the relevant definitions and state the main results of the paper.  We define dp-smallness in Definition \ref{defn_WeakCO}.  Theorem \ref{thm_MainResults2} shows that many of the results of \cite{fg2} hold for dp-smallness instead of convex orderability.  Finally, Theorem \ref{Thm_CoRC} states that all dp-small ordered fields are real closed, generalizing Theorem 5.3 of \cite{mms}.  In Section \ref{Sec_WCO}, we provide a proof that dp-smallness does fit strictly between convex orderability and dp-minimality.  We also prove Theorem \ref{thm_MainResults2}.  In Section \ref{Sec_VCMinOF}, we prove Theorem \ref{Thm_CoRC}.  In Section \ref{Sec_VCMinF}, we discuss VC-minimal fields in general.  We show that VC-minimal stable fields are algebraically closed and conjecture that all VC-minimal fields are either algebraically closed or real closed.

\section{Definitions and Results}\label{Sec_DefRes}

The following definition is due to H. Adler in \cite{adl}.

\begin{defn}\label{Defn_VCMin}
 Fix a set of formulas $\Psi = \{ \psi_i(x; y_i) : i \in I \}$ (where $x$ is a free variable in every formula, but the $y_i$'s may vary).  We say that $\Psi$ is \emph{directed} if, for all $i, j \in I$, $b \in \mathcal{U}_{y_i}$, and $c \in \mathcal{U}_{y_j}$, we have that one of the following holds:
 \begin{enumerate}
  \item $\models \forall x ( \psi_i(x; b) \rightarrow \psi_j(x; c) )$,
  \item $\models \forall x ( \psi_j(x; c) \rightarrow \psi_i(x; b) )$, or
  \item $\models \neg \exists x ( \psi_i(x; b) \wedge \psi_j(x; c) )$.
 \end{enumerate}
 We say that a theory $T$ is \emph{VC-minimal} if there exists a directed $\Psi = \{ \psi_i(x; y_i) : i \in I \}$ where $x$ is of the home sort and each $L(\mathcal{U})$-formula $\theta(x)$ is a boolean combination of instances of formulas from $\Psi$.
\end{defn}

This is an important concept because it generalizes some other ``minimal'' notions in model theory.  For example, every strongly minimal theory is VC-minimal and every o-minimal theory is VC-minimal.  Moreover, a few interesting algebraic examples are VC-minimal, including algebraically closed valued fields.  In \cite{fg2}, J. Flenner and this author classify VC-minimality in certain algebraic structures using an intermediate tool called convex orderability.  This notion was first introduced in \cite{gl}.

\begin{defn}\label{Defn_CO}
 An $L$-structure $M$ is \emph{convexly orderable} if there exists $\lhd$ a linear order on $M$ (not necessarily definable) such that, for all $L$-formulas $\varphi(x; y)$ with $x$ in the home sort, there exists $K_\varphi < \omega$ such that, for all $b \in M_y$, the set $\varphi(M; b)$ is a union of at most $K_\varphi$ $\lhd$-convex subsets of $M$.
\end{defn}

It is shown in \cite{gl} (Proposition 2.3) that convex orderability is an elementary property, so we say a theory $T$ is \emph{convexly orderable} if for any (equivalently all) $M \models T$, $M$ is convexly orderable.  It is also shown in \cite{gl} (Theorem 2.4) that all VC-minimal theories are convexly orderable.  One reason convex orderability is preferable over VC-minimality is that it is easier to show a theory is not convexly orderable.  This is the main tool used in obtaining the results from \cite{fg2}.

\begin{thm}[Main results of \cite{fg2}]\label{thm_MainResults}
 The following hold:
 \begin{enumerate}
  \item If $T = \Th(G; +, <)$ is the theory of an infinite ordered group, then $T$ is convexly orderable if and only if $G$ is abelian divisible.
  \item If $T = \Th(F; +, \cdot, <)$ is the theory of a convexly orderable ordered field, then all positive elements of $F$ have an $n$th root for all $n < \omega$.
  \item If $T = \Th(A; +)$ is the theory of an abelian group, then $T$ is convexly orderable if and only if $T$ is dp-minimal and $A$ has upward coherence (see Definition \ref{defn_UpwardCoherence} below).
  \item If $T = \Th(F; +, \cdot, |)$ is the theory of an Henselian valued field that is convexly orderable, then $\Gamma$ the value group is divisible.
 \end{enumerate}
\end{thm}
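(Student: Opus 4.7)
My plan is to prove each of the four parts by establishing the general template used throughout \cite{fg2}: the ``if'' directions follow from standard facts (a divisible ordered abelian group is o-minimal, hence convexly orderable; a dp-minimal abelian group with upward coherence admits the requisite order), while the ``only if'' directions proceed by contraposition: assuming the algebraic property fails, I exhibit a parameterized family $\varphi(x; y)$ whose convex complexity grows without bound under every candidate order $\lhd$ on $\mathcal{U}$, contradicting convex orderability.

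For (1), if $G$ is an ordered group but $nG \ne G$ for some $n \ge 2$, consider $\varphi(x; y) \equiv \exists z\, (nz + y = x)$, which defines the coset $y + nG$. Since $nG$ has at least two cosets and each is dense and cofinal in $G$, one can choose parameters $y_1, \dots, y_K$ so that the induced coset membership alternates arbitrarily often under $\lhd$, contradicting the uniform bound $K_\varphi$. If $G$ is non-abelian, pick non-commuting $g, h$ and use the conjugation orbit of $h$ under powers of $g$ to generate a definable family whose $\lhd$-shape varies without bound. Part (2) refines this to ordered fields: if $a \in F_+$ has no $n$th root, then $\{x \in F_+ : x^n < a\}$ is a proper initial segment of $F_+$ whose supremum is missing from $F$, and the multiplicative action of $F^{\times}$ translates these cuts around to produce a parameterized family of arbitrarily high convex complexity.

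Parts (3) and (4) follow analogous patterns. For (3), one direction is the general fact that convex orderability implies dp-minimality, together with a check that upward coherence (as defined later in Definition \ref{defn_UpwardCoherence}) follows from convex orderability; the other direction builds the required linear order out of the coherence structure and verifies finite convex complexity for each formula using dp-minimality. For (4), the divisibility of $\Gamma$ is pulled back to $F$: a non-divisibility witness in $\Gamma$ yields, via the formula $x \mid y$, a formula in $F$ with unbounded convex complexity, using Henselianity to ensure definability transfers cleanly between $F$ and $\Gamma$.

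The main obstacle throughout is the uniformity of the negative direction: the witnessing family $\varphi(x; y)$ must simultaneously defeat \emph{every} possible ordering $\lhd$, not just the natural one. This requires exploiting density/cofinality properties of the obstructing subset (a coset of $nG$, the locus of missing $n$th roots, or a non-trivial coset of $n\Gamma$) together with the fact that a parameter family of such subsets cannot be decomposed into boundedly many convex pieces with respect to any linear order on $\mathcal{U}$. I expect the trickiest case to be part (2), where one must use multiplicative translations to fabricate the required interleaving from a single algebraic defect (a missing $n$th root), rather than from a whole subgroup as in (1) and (4).
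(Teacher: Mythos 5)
First, note that Theorem~\ref{thm_MainResults} is not proved in this paper: it is quoted verbatim from \cite{fg2}.  What the paper does prove is the stronger analog, Theorem~\ref{thm_MainResults2}, where ``convexly orderable'' is weakened to ``dp-small,'' and the proof there makes explicit what formulas are used.  Comparing your sketch against that proof (which is essentially the proof in \cite{fg2} as well) reveals a genuine gap in the core negative argument of part (1).  You propose to use $\varphi(x;y)\equiv\exists z\,(nz+y=x)$ and alternate coset membership of $nG$.  But when $[G:nG]$ is finite --- e.g.\ $G=\mathbb{Z}$, $n=p$ --- there are only finitely many cosets, and one can choose $\lhd$ so that each coset of $nG$ is $\lhd$-convex; then every instance of your $\varphi$ splits into at most $n$ convex pieces, and the argument dies.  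The actual obstruction has to be \emph{infinitely many} disjoint cofinal sets.  The paper's witness is $\varphi_i(x)=(p^i\mid x)\wedge(p^{i+1}\nmid x)$, i.e.\ the $p$-adic valuation slices $p^iG\setminus p^{i+1}G$, together with intervals $\psi(x;y,z)=y<x<z$; the crucial lemma (Lemma~3.3 of \cite{fg2}) is that each slice is cofinal in $G$, which is what lets one interleave arbitrarily many of them with arbitrarily many intervals no matter what $\lhd$ is.  Cosets do not give you this.

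There are also two cleaner routes you missed.  For the non-abelian case of (1), the paper does not run a conjugation-orbit argument at all: it passes through dp-minimality (convexly orderable $\Rightarrow$ dp-small $\Rightarrow$ dp-minimal) and invokes Proposition~3.3 of \cite{simon}, that dp-minimal ordered groups are abelian --- a nontrivial theorem you would essentially be re-deriving.  And part (2) is not a fresh ``missing root as a cut'' argument: it is an immediate corollary of (1), since $(F_+;\cdot,<)$ is itself a convexly orderable (resp.\ dp-small) ordered group, so divisible, so every positive element has $n$th roots.  Your plans for (3) and (4) are morally correct but underdeveloped: for (3) the only-if direction requires actually extracting, from a failure of upward coherence, a descending chain $A_0\supseteq A_1\supseteq\cdots$ of p.p.-definable subgroups and the subgroup $B$ so that $\varphi_i(x)=[x\in A_i\setminus A_{i+1}]$ and $\psi(x;y)=[x-y\in B]$ form the witnessing pattern (the paper outsources this construction to Theorem~5.11 of \cite{fg2}); for (4) the paper organizes the transfer through the general Lemma~\ref{Lem_SIwCO} on simple interpretability and then just cites (1), rather than re-running the argument inside $F$.

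\end{document}
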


In each of the cases above, the crux of the argument is using a combinatorial property about the theory to show that it can not be convexly ordered.  It boils down to the following notion.

\begin{defn}\label{defn_WeakCO}
 We say that a partial type $\pi(x)$ is \emph{dp-small} if there does not exist $\varphi_i(x)$ an $L(\mathcal{U})$-formula for $i < \omega$, $\psi(x; y)$ an $L$-formula, and $b_j \in \mathcal{U}_y$ for $j < \omega$ such that, for all $i_0, j_0 < \omega$, the type
 \[
  \pi(x) \cup \{ \varphi_{i_0}(x), \psi(x; b_{j_0}) \} \cup \{ \neg \varphi_i(x) : i \neq i_0 \} \cup \{ \neg \psi(x; b_j) : j \neq j_0 \}
 \]
 is consistent.  We say $T$ is \emph{dp-small} if $x=x$ is dp-small where $x$ is of the home sort.
\end{defn}

Compare this to the definition of ICT-patterns and dp-minimality (Definition \ref{Defn_dpMin} below).  In fact, dp-smallness implies dp-minimality (see Proposition \ref{Prop_wCOdpMin} below).

As promised, we have the following relationship between dp-small and convexly orderable.

\begin{prop}\label{prop_wCOCO}
 If $T$ is convexly orderable, then $T$ is dp-small.
\end{prop}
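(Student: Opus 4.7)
The plan is to argue by contradiction. Suppose $T$ is convexly orderable but not dp-small, and fix witnesses $\varphi_i(x)$ ($L(\mathcal{U})$-formulas), $\psi(x;y)$ (an $L$-formula), and $b_j \in \mathcal{U}_y$ for $i,j < \omega$. For each pair $(i,j)$, choose $a_{i,j}$ realizing the consistent type described in Definition~\ref{defn_WeakCO}; the distinguishing conditions force the $a_{i,j}$ to be pairwise distinct. Writing each $\varphi_i(x) = \chi_i(x; d_i)$ for some $L$-formula $\chi_i$ and $d_i \in \mathcal{U}$, we let $K_i$ and $K$ denote the convex-component bounds for $\chi_i$ and $\psi$ supplied by Definition~\ref{Defn_CO}, applied inside a model $\mathcal{N} \models T$ that contains the relevant parameters and carries a convex order $\lhd$.

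The heart of the argument is a finite combinatorial count on a sub-grid of size $(K+1) \times S$. The plan is to fix any $K+1$ rows $i_0, \ldots, i_K$, set $S = \sum_{k \leq K} K_{i_k}$, and choose any $S$ columns $j_0, \ldots, j_{S-1}$; by compactness, there exist $(K+1) \cdot S$ pairwise distinct realizations $a_{k,l} \in \mathcal{N}$ of the corresponding finite sub-pattern. Listing them in $\lhd$-order as $e_1 \lhd \cdots \lhd e_{(K+1)S}$ and recording grid labels $(r_p, c_p)$ via $e_p = a_{r_p, c_p}$, we would use convex orderability twice: for each $k$, the positions $\{p : r_p = k\}$ coincide with $\chi_{i_k}(\mathcal{N}; d_{i_k}) \cap \{e_1, \ldots, e_{(K+1)S}\}$, a union of at most $K_{i_k}$ runs of consecutive indices; summing yields at most $S-1$ indices $p$ with $r_p \neq r_{p+1}$. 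Analogously, at most $SK - 1$ indices $p$ satisfy $c_p \neq c_{p+1}$. Since the $e_p$ are pairwise distinct, every $p < (K+1)S$ must produce at least one row or column transition, so
\[
 (K+1)S - 1 \leq (S - 1) + (SK - 1) = (K+1)S - 2,
\]
which is impossible.

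The step we expect to require the most thought is choosing the grid dimensions so that the combinatorial inequality closes; one might be tempted first to uniformize the family $\{\varphi_i\}$ so that all $\chi_i$ coincide, but this is unnecessary, and would in any case be delicate if $L$ is uncountable. Because only $K+1$ of the per-formula bounds $K_i$ are ever summed---each finite by Definition~\ref{Defn_CO}---the argument works regardless of how the $\varphi_i$ are distributed among $L$-formulas, while the uniform bound $K$ on the $\psi$-side, which convex orderability provides for the fixed $L$-formula $\psi$, supplies exactly the ingredient needed to make the inequality contradictory.
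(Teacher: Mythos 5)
Your argument is correct, and it takes a genuinely different route from the paper's. The paper first replaces the $\varphi_i$ by mutually inconsistent conjuncts, then fixes $i \le 2K$ (with $K$ the $\psi$-bound), takes a uniform bound $L$ on the number of convex pieces of these $\varphi_i(\mathcal U)$, and uses a pigeonhole argument over the columns to find, for each $i \le 2K$, a single convex piece $C_{i,\ell_i}$ that meets both $\psi(\mathcal U;b_j)$ and its complement for a common $j$; ordering the disjoint pieces $C_{0,\ell_0} \lhd \cdots \lhd C_{2K,\ell_{2K}}$ and picking alternating points produces more than $K$ alternations against $\psi(\mathcal U;b_j)$, a contradiction. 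Your proof instead fixes a finite $(K+1)\times S$ sub-grid of realizations, orders them by $\lhd$, and counts label transitions: each row $k$ contributes at most $K_{i_k}$ maximal runs (so at most $S-1$ row transitions in total), each of the $S$ columns contributes at most $K$ maximal runs (so at most $SK-1$ column transitions), and since distinct realizations force a row or column change at every step one gets $(K+1)S-1 \le (K+1)S-2$. Your version avoids the preliminary disjointification of the $\varphi_i$, avoids the infinite pigeonhole, and does not require a uniform bound across the $\varphi_i$ (correctly noting only finitely many of the per-formula bounds $K_i$ ever enter); what it requires instead, which you verify, is that each row-label class and column-label class of positions in the ordered list is exactly the trace of a single definable set $\chi_{i_k}(\mathcal N;d_{i_k})$ or $\psi(\mathcal N;b_{j_l})$, which is exactly what the ``$\neg\varphi_i$ for $i\neq i_0$'' and ``$\neg\psi(x;b_j)$ for $j\neq j_0$'' clauses of Definition~\ref{defn_WeakCO} deliver. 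Both arguments hinge on the same finiteness from Definition~\ref{Defn_CO}; yours packages it as a tight double-counting inequality, the paper's as an alternation bound after pigeonhole.
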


In this paper, we show that dp-smallness is enough to get all the results in Theorem \ref{thm_MainResults}.  This generalizes most of the results from \cite{fg2}.  That is,

\begin{thm}[Results of \cite{fg2}, revisited]\label{thm_MainResults2}
 The following hold:
 \begin{enumerate}
  \item If $T = \Th(G; +, <)$ is the theory of an infinite ordered group, then $T$ is dp-small if and only if $G$ is abelian divisible.
  \item If $T = \Th(F; +, \cdot, <)$ is the theory of a dp-small ordered field, then all positive elements of $F$ have an $n$th root for all $n < \omega$.
  \item If $T = \Th(A; +)$ is the theory of an abelian group, then $T$ is dp-small if and only if $T$ is dp-minimal and $A$ has upward coherence.
  \item If $T = \Th(F; +, \cdot, |)$ is the theory of a dp-small Henselian valued field, then $\Gamma$ the value group is divisible.
 \end{enumerate}
\end{thm}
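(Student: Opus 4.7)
The plan is to combine Proposition \ref{prop_wCOCO} with a careful reanalysis of the arguments in \cite{fg2}. For each of the four parts, one direction is immediate: the structures on the ``nice'' side of each biconditional (abelian divisible ordered groups, fields with all $n$th roots, dp-minimal abelian groups with upward coherence, Henselian valued fields with divisible value group) are known to be convexly orderable by Theorem \ref{thm_MainResults}, hence dp-small by Proposition \ref{prop_wCOCO}. The real content is the other direction: failure of the algebraic property must force failure of dp-smallness, not merely of convex orderability.

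My strategy for that direction is to inspect each argument in \cite{fg2} that refutes convex orderability and verify that the combinatorial configuration it constructs actually refutes dp-smallness in the sense of Definition \ref{defn_WeakCO}. A typical such argument produces, from a failure of an algebraic property, an $L$-formula $\psi(x;y)$, parameters $\{b_j : j < \omega\}$, and auxiliary $L(\mathcal{U})$-definable unary predicates $\{\varphi_i(x) : i < \omega\}$ (arising from cosets of a definable subgroup, fibers of a valuation, residue classes, or analogous partition data), such that the intersection pattern of $\varphi_i$ with $\psi(x;b_j)$ forces any $\lhd$-convex decomposition of $\psi(x;b_j)$ to have unboundedly many pieces. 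The key observation is that Definition \ref{defn_WeakCO} permits the $\varphi_i$'s to be arbitrary parameter-definable unary formulas---not instances of a single $L$-formula---so they may be taken verbatim from this auxiliary family, and the same intersection pattern then realizes the $\omega \times \omega$ consistency diagram required to refute dp-smallness.

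As a representative case, for part (1) in the non-divisible subcase, I would choose $n$ with $nG \subsetneq G$, let the $\varphi_i(x)$ pick out distinct cosets of a definable subgroup in a sufficiently saturated extension, and let the $\psi(x;b_j)$ be short intervals centered at carefully spaced parameters $b_j$. The verification required by Definition \ref{defn_WeakCO} is that each ``cell'' of the grid---namely $\varphi_{i_0}(x) \wedge \psi(x;b_{j_0}) \wedge \bigwedge_{i \neq i_0} \neg \varphi_i(x) \wedge \bigwedge_{j \neq j_0} \neg \psi(x;b_j)$---is consistent, and this follows from the density of the configuration already produced in \cite{fg2}. The non-abelian subcase of (1) and the parallel arguments for (2), (3), and (4) (using, respectively, a failure of $n$th-root extraction together with translated squaring cosets, a failure of upward coherence dualized against dp-minimal columns, and a non-divisible value group acting by translation on a valuation ball) are handled by the same translation.

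The principal obstacle will be that the arguments in \cite{fg2} are naturally phrased in the language of convex components relative to an arbitrary linear order $\lhd$, whereas dp-smallness is an intrinsic property with no external order. I will need to extract from each proof the underlying order-free combinatorial pattern, check it coincides with the pattern of Definition \ref{defn_WeakCO}, and confirm that the saturation arguments producing enough parameters $\{b_j\}$ in \cite{fg2} simultaneously produce enough auxiliary predicates $\{\varphi_i\}$ to fill out an $\omega \times \omega$ grid rather than merely witness unbounded convex complexity for each single row.
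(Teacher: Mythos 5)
Your overall strategy matches the paper's: extract from each \cite{fg2} argument the order-free configuration and observe that Definition \ref{defn_WeakCO} permits non-uniform $L(\mathcal{U})$-formulas $\varphi_i$, so those configurations already refute dp-smallness directly. A few corrections and differences are worth noting. For part (1), the non-abelian subcase is \emph{not} handled by translating a convex-orderability argument: the paper first applies Proposition \ref{Prop_wCOdpMin} to get dp-minimality and then cites Simon's result that dp-minimal ordered groups are abelian; the dp-smallness pattern is used only for the non-divisible subcase. There, the paper's $\varphi_i$ are the level sets $(p^i\mid x)\wedge(p^{i+1}\nmid x)$, not cosets of a single subgroup as your sketch suggests --- this matters, since $[G:pG]$ may be finite and cosets would give only finitely many $\varphi_i$, while the level sets are always infinite in number when $pG\neq G$. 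Part (2) the paper obtains as an immediate corollary of part (1), applied to the dp-small ordered group $(F_+;\cdot,<)$, which is cleaner than the direct ``translated squaring cosets'' argument you propose. For part (4), the mechanism is a preservation lemma (dp-smallness passes along simple interpretations, Lemma \ref{Lem_SIwCO}) followed by part (1) applied to the value group. Finally, your claim that the ``nice'' direction of each part is immediate applies only to (1) and (3); parts (2) and (4) are one-directional, so there is no converse to establish. None of these affect the viability of the core idea, but the appeals to dp-minimality and to the preservation lemma are structural steps your outline omits.
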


Beyond this, the other main result of this paper is the following.  This answers Open Question 3.7 of \cite{fg2} in the affirmative.

\begin{thm}\label{Thm_CoRC}
 Suppose that $\mathfrak{F} = (F; +, \cdot, <)$ is an ordered field and $T = \Th(\mathfrak{F})$.  The following are equivalent.
 \begin{enumerate}
  \item $T$ is VC-minimal.
  \item $T$ is convexly orderable.
  \item $T$ is dp-small.
  \item $\mathfrak{F}$ is real closed.
 \end{enumerate}
\end{thm}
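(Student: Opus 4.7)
The implications (4) $\Rightarrow$ (1), (1) $\Rightarrow$ (2), and (2) $\Rightarrow$ (3) follow from standard facts: real closed fields are o-minimal and hence VC-minimal; VC-minimality implies convex orderability by Theorem 2.4 of \cite{gl}; and convex orderability implies dp-smallness by Proposition \ref{prop_wCOCO}. The substance of the theorem is therefore the implication (3) $\Rightarrow$ (4): every dp-small ordered field is real closed.

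By Theorem \ref{thm_MainResults2}(2), a dp-small ordered field $\mathfrak{F}$ has $n$th roots of every positive element for every $n < \omega$; in particular, $F$ is Pythagorean. Since a Pythagorean ordered field is real closed if and only if every odd-degree polynomial over it has a root in the field, it suffices to prove the latter. Assume, toward contradiction, that some irreducible monic polynomial $p(x) \in F[x]$ of odd degree $n \ge 3$ has no root in $F$. Let $\alpha$ be a real root of $p$ in the real closure of $F$: so $\alpha \notin F$, $\alpha$ is algebraic over $F$, and $\alpha$ determines a nontrivial Dedekind cut of $F$.

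The plan is to exploit this cut to manufacture an ICT-configuration, contradicting dp-smallness. The translates $p(x - a)$ for $a \in F$ are $F$-definable formulas whose only real root in the real closure is $\alpha + a$; they thus provide an $F$-parametrized family of distinct cuts of $F$ that all share the ``same shape'' as $C$. I propose taking $\psi(x; y, z) := (y < x < z)$ with parameter pairs chosen so that the resulting intervals in $F$ are pairwise disjoint, and choosing each $\varphi_i(x)$ as a tailored Boolean combination of sign conditions on $p$ and its translates (and possibly its multiplicative dilates $p(c \cdot x)$), designed so that, within each $\psi$-interval, each $\varphi_i$ picks out a specific cell indexed by $i$. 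Crucially, dp-smallness permits the $\varphi_i$'s to be arbitrary $L(\mathcal{U})$-formulas, one per index, which gives enough latitude to engineer such a grid.

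The principal obstacle is verifying the ICT consistency condition: for each pair $(i_0, j_0)$, the formula $\varphi_{i_0}(x) \wedge \psi(x; b_{j_0}) \wedge \bigwedge_{i \neq i_0} \neg \varphi_i(x) \wedge \bigwedge_{j \neq j_0} \neg \psi(x; b_j)$ must be satisfiable in $\mathcal{U}$. A naive attempt using only additive translates $p(x - a_i)$ yields two families of pairwise disjoint intervals, which cannot all mutually intersect in a linearly ordered structure; the resolution is to combine additive translates with a second, essentially transverse construction (for instance multiplicative dilates $p(c \cdot x)$, whose roots are $\alpha/c$) so that the two parameters genuinely coordinatize distinct cells of the cut-neighborhood of $\alpha$ in $\mathcal{U}$. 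The Pythagorean structure of $F$ and the density of $F$ in its real closure are then invoked to show that each cell of the resulting grid is witnessed by an element of $\mathcal{U}$. Exhibiting this ICT-pattern contradicts dp-smallness of $T$, forcing $p$ to have a root in $F$ after all; hence $\mathfrak{F}$ is real closed, completing the proof.
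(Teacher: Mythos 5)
The forward implications (4)$\Rightarrow$(1)$\Rightarrow$(2)$\Rightarrow$(3) are fine, and the reduction of (3)$\Rightarrow$(4) to showing that odd-degree polynomials have roots (given that all positives have $n$th roots, from Theorem \ref{thm_MainResults2}(2)) is a reasonable starting point. But the core of your argument --- manufacturing a witness to non-dp-smallness directly from an irreducible odd-degree $p$ with real root $\alpha \notin F$ --- is never actually carried out, and I do not believe it can be carried out in the form you sketch. You correctly note that two families of pairwise disjoint intervals cannot form the required grid, and you propose to fix this by combining additive translates $p(x-a)$ with multiplicative dilates $p(cx)$; but you never verify the crucial consistency condition (that for each $(i_0,j_0)$ the cell $\varphi_{i_0}\wedge\psi(x;b_{j_0})\wedge\bigwedge_{i\neq i_0}\neg\varphi_i\wedge\bigwedge_{j\neq j_0}\neg\psi(x;b_j)$ is realized), and the roots $\alpha+a$ and $\alpha/c$ of your translates and dilates still all live in the same linear order of the real closure $R$, so the ``transverse'' coordinatization is not in evidence. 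Worse, you then ``invoke the density of $F$ in its real closure'' to populate the cells, but a non--real-closed ordered field with $n$th roots of positives need not be dense in its real closure --- establishing enough density is precisely the hard content of the theorem, not something one may assume.

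The paper's proof is structurally quite different: it equips $F$ with the archimedean valuation $v$ and proves that $(F,v,\Gamma)$ is Henselian (Lemma \ref{Lem_512co}) with divisible value group (Lemma \ref{Lem_510co}, via the divisibility of $(F_+;\cdot,<)$) and real closed residue field (Lemma \ref{Lem_511co}); real closedness of $F$ then follows by an Ax--Kochen--Ershov-style transfer. The residue-field step itself rests on Proposition \ref{Prop_54dpmin} (the density statement you wanted to assume) which in turn depends on the open-mapping argument of Proposition \ref{Prop_59dpmin} and the Macintyre--McKenna--van den Dries polynomial construction of Lemma \ref{Lem_Prop54Lem}. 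None of that machinery appears in your proposal, and the single combinatorial configuration you gesture at would have to compress all of it; as written there is a genuine gap at the heart of (3)$\Rightarrow$(4).
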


As an immediate corollary, we get the following.

\begin{cor}\label{Cor_CoVF}
 Suppose that $\mathfrak{F} = (F; +, \cdot, <, |)$ is a dp-small ordered Henselian valued field (with non-trivial valuation).  Then $\mathfrak{F}$ is a real closed valued field.
\end{cor}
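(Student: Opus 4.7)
The plan is to combine Theorem \ref{Thm_CoRC} and Theorem \ref{thm_MainResults2}(4) and then close the gap with a short classical fact about valuations on real closed fields. First I would observe that dp-smallness passes to reducts: any witness to its failure in a sublanguage is already such a witness in the expansion. Applying this, the reduct $(F; +, \cdot, <)$ is a dp-small ordered field, so Theorem \ref{Thm_CoRC} yields that $F$ is real closed; meanwhile the reduct $(F; +, \cdot, |)$ is a dp-small Henselian valued field, so Theorem \ref{thm_MainResults2}(4) yields that the value group $\Gamma$ is divisible.

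To conclude that $\mathfrak F$ is a real closed valued field, I would show that the Henselian valuation on $F$ is convex with respect to the unique ordering on the real closed field. The standard argument goes as follows: suppose toward a contradiction that some $x \in F$ satisfies $x > 1$ and $v(x) > 0$; by real closedness write $x - 1 = c^2$ for some $c \in F^\times$. A case split on $v(c)$ finishes the job. If $v(c) \neq 0$, then by the ultrametric equality $v(1 + c^2) = \min(0, v(c^2)) \leq 0$, contradicting $v(x) > 0$. If instead $v(c) = 0$, then $c \in \mathcal{O}_v^{\times}$ and $1 + c^2 \in \mathfrak m_v$ forces $\bar c^{2} = -1$ in the residue field $k$; but then Hensel's lemma applied to $X^2 + 1 \in \mathcal O_v[X]$ (whose derivative is a unit at $c$ in characteristic zero) lifts $\bar c$ to an actual square root of $-1$ in $F$, contradicting real closedness. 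Hence the valuation ring is convex, so $\mathfrak F$ is a real closed valued field.

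The main obstacle I anticipate is really just the convexity step, but this is a classical piece of valuation theory on real closed fields and, if desired, can be cited in a single line rather than proved in place. Everything else is a direct invocation of the two main theorems of the paper, which is why the author calls this an immediate corollary.
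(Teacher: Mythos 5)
Your proposal takes a genuinely different route from the paper. At the end of Section~2 the paper reasons about the residue field: Lemma~\ref{Lem_SIwCO} and Theorem~\ref{Thm_CoRC} show the residue field (which inherits the ordering because the valuation of an ordered valued field is convex) is real closed, Theorem~\ref{thm_MainResults2}(4) gives divisibility of the value group, and then Henselianity together with these two facts implies $F$ is real closed. You instead apply Theorem~\ref{Thm_CoRC} directly to the ordered-field reduct $(F; +, \cdot, <)$ to get that $F$ is real closed, and then try to deduce the valuation-theoretic properties. That decomposition is cleaner in one respect: once $F$ is real closed and the valuation ring is convex, divisibility of $\Gamma$ and real-closedness of the residue field and Henselianity all come for free, so your step invoking Theorem~\ref{thm_MainResults2}(4) is actually redundant.

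Two remarks on the convexity step. First, in the standard convention (and the one the paper tacitly uses, since it treats the residue field as an \emph{ordered} field and applies Theorem~\ref{Thm_CoRC} to it), an ``ordered valued field'' already means the valuation ring is convex; under that reading your third step is unnecessary and the corollary follows from your first step alone. Second, if you do insist on proving convexity rather than assuming it, the Hensel argument as written has a gap. You justify that $f'(c)=2c$ is a unit ``in characteristic zero,'' but $F$ having characteristic zero does not preclude the residue field $k$ from having characteristic~$2$ (i.e., $v(2)>0$ before convexity is established). In that case $2c \in \mathfrak m_v$, $X^2+1 = (X-\bar c)^2$ over $k$, and the na\"ive Hensel lifting fails. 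One must separately rule out residue characteristic~$2$ or cite the classical fact that every Henselian valuation on a real closed field is convex (see, e.g., Engler--Prestel, \emph{Valued Fields}). Apart from this, the reduction steps (dp-smallness passes to reducts, and Theorem~\ref{Thm_CoRC} applies to the ordered-field reduct) are correct, and the proposal does reach the right conclusion.
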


\section{dp-Smallness}\label{Sec_WCO}

In this section, we prove Proposition \ref{prop_wCOCO} and Theorem \ref{thm_MainResults2}.

\begin{proof}[Proof of Proposition \ref{prop_wCOCO}]
 Suppose $T$ is not dp-small.  Therefore, there exists $L(\mathcal{U})$-formulas $\varphi_i(x)$ for $i < \omega$, an $L$-formula $\psi(x; y)$, and $b_j \in \mathcal{U}_y$ for $j < \omega$ ($x$ is of the home sort) such that, for all $i_0, j_0 < \omega$,
 \[
  \{ \varphi_{i_0}(x), \psi(x; b_{j_0}) \} \cup \{ \neg \varphi_i(x) : i \neq i_0 \} \cup \{ \neg \psi(x; b_j) : j \neq j_0 \}
 \]
 is consistent.  By replacing $\varphi_i(x)$ with $\varphi'_i(x) = \varphi_i(x) \wedge \bigwedge_{i' < i} \neg \varphi_{i'}(x)$, we may assume that the $\varphi_i(x)$ are pairwise inconsistent.
 
 By means of contradiction, suppose $T$ is convexly orderable.  Say $\lhd$ is a convex ordering on $\mathcal{U}$.  Further, let $K < \omega$ be such that, for all $b \in \mathcal{U}_y$, $\psi(\mathcal{U}; b)$ is a union of at most $K$ $\lhd$-convex subsets of $\mathcal{U}$.  Now look at $\varphi_i(x)$ for $i \le 2K$ and suppose $L < \omega$ is such that, for all $i \le 2K$, $\varphi_i(\mathcal{U})$ is a union of at most $L$ $\lhd$-convex subsets of $\mathcal{U}$.  Let $C_{i,\ell}$ enumerate these.  That is, $C_{i,\ell} \subseteq \mathcal{U}$ is $\lhd$-convex and, for each $i \le 2K$,
 \[
  \varphi_i(\mathcal{U}) = \bigcup_{\ell < L} C_{i,\ell}.
 \]
 By definition (and saturation of $\mathcal{U}$), for each $i \le 2K$ and $j < \omega$,
 \[
  \varphi_{i}(\mathcal{U}) \cap \psi(\mathcal{U}; b_{j}) \setminus \left( \bigcup_{j' \neq j} \psi(\mathcal{U}; b_{j'}) \right) \neq \emptyset.
 \]
 By pigeon-hole, there exists $J \subseteq \omega$ infinite such that, for each $i \le 2K$, there exists $\ell_i < L$ such that, for all $j \in J$,
 \[
  C_{i, \ell_i} \cap \psi(\mathcal{U}; b_{j}) \setminus \left( \bigcup_{j' \neq j} \psi(\mathcal{U}; b_{j'}) \right) \neq \emptyset.
 \]
 In particular, for any fixed $j \in J$, for all $i \le 2K$,
 \[
  C_{i, \ell_i} \cap \psi(\mathcal{U}; b_j) \neq \emptyset \text{ and } C_{i, \ell_i} \cap \neg \psi(\mathcal{U}; b_j) \neq \emptyset.
 \]
 Without loss of generality, suppose $C_{0,\ell_0} \lhd C_{1, \ell_1} \lhd ... \lhd C_{2K, \ell_{2K}}$.  For each even $i \le 2K$, choose $a_i \in C_{i, \ell_i} \cap \psi(\mathcal{U}; b_j)$ and, for each odd $i \le 2K$, choose $a_i \in C_{i, \ell_i} \cap \neg \psi(\mathcal{U}; b_j)$.  Thus, $a_0 \lhd ... \lhd a_{2K}$ but it alternates belonging to $\psi(\mathcal{U}; b_j)$.  This contradicts the fact that $\psi(\mathcal{U}; b_j)$ is a union of at most $K$ $\lhd$-convex subsets of $\mathcal{U}$.
\end{proof}

To see that dp-smallness is, in fact, distinct from convex orderability, consider an example from \cite{dgl}.  Let $L$ be the language consisting of unary predicates $P_i$ for $i \in \omega_1$.  Let $T$ be the theory stating that, for each finite $I, J \subseteq \omega_1$ with $I \cap J = \emptyset$, there are infinitely many elements realizing
\[
 \bigwedge_{i \in I} P_i(x) \wedge \bigwedge_{j \in J} \neg P_j(x).
\]
By Proposition 3.6 of \cite{dgl}, this theory is complete, has quantifier elimination, but is not VC-minimal.  By Example 2.10 of \cite{gl}, $T$ is not convexly orderable.  However, $T$ is dp-small.  To see this, notice that any supposed witness to non-dp-smallness would involve only countably many predicates $P_i$, but the reduct to countably many predicates is VC-minimal (see the discussion after Example 2.10 of \cite{gl}).  How does dp-small compare to dp-minimality?

\begin{defn}\label{Defn_dpMin}
 A partial type $\pi(x)$ is \emph{dp-minimal} if there does not exist $L$-formulas $\varphi(x; y)$ and $\psi(x; z)$, $a_i \in \mathcal{U}_y$ for $i < \omega$, and $b_j \in \mathcal{U}_z$ for $j < \omega$ such that, for all $i_0, j_0 < \omega$, the type
 \[
  \pi(x) \cup \{ \varphi(x; a_{i_0}), \psi(x; b_{j_0}) \} \cup \{ \neg \varphi(x; a_i) : i \neq i_0 \} \cup \{ \neg \psi(x; b_j) : j \neq j_0 \}
 \]
 is consistent.  We call such a witness to non-dp-minimality an \emph{ICT-pattern}.  We say $T$ is \emph{dp-minimal} if $x=x$ is dp-minimal where $x$ is of the home sort.
\end{defn}

\begin{prop}\label{Prop_wCOdpMin}
 If a partial type $\pi(x)$ is dp-small, then $\pi$ is dp-minimal.  In particular, all dp-small theories are dp-minimal.
\end{prop}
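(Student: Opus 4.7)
The plan is to prove the contrapositive: assuming $\pi(x)$ is not dp-minimal, I will directly exhibit a witness to non-dp-smallness using the ICT-pattern given. The key observation is that the dp-smallness configuration is less restrictive than the dp-minimality configuration on one of the two ``rows'': the $\varphi_i(x)$ in Definition \ref{defn_WeakCO} are allowed to be arbitrary $L(\mathcal{U})$-formulas in the single variable $x$, whereas in Definition \ref{Defn_dpMin} both rows must arise from a single $L$-formula with varying parameters. Since any ICT-pattern on the $\varphi$-row automatically supplies a sequence of such $L(\mathcal{U})$-formulas, the implication should be a straightforward translation of witnesses.

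Concretely, suppose $\pi(x)$ fails dp-minimality, as witnessed by $L$-formulas $\varphi(x;y)$, $\psi(x;z)$ and parameters $a_i \in \mathcal{U}_y$, $b_j \in \mathcal{U}_z$ (for $i,j < \omega$) forming an ICT-pattern over $\pi$. I would then define, for each $i < \omega$, the $L(\mathcal{U})$-formula
\[
 \varphi_i(x) := \varphi(x; a_i),
\]
which has $x$ as its only free variable (the tuple $a_i$ being absorbed as parameters in the expanded language). Keeping $\psi(x;z)$ and the parameters $b_j$ as in the ICT-pattern, the consistency statement demanded by the definition of non-dp-smallness,
\[
 \pi(x) \cup \{\varphi_{i_0}(x), \psi(x; b_{j_0})\} \cup \{\neg \varphi_i(x) : i \neq i_0\} \cup \{\neg \psi(x; b_j) : j \neq j_0\},
\]
is literally the same set of formulas as the consistent type given by the ICT-pattern at coordinates $(i_0, j_0)$. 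Hence $\pi$ is not dp-small.

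The ``in particular'' assertion then follows at once by applying the contrapositive with $\pi(x) = \{x = x\}$, since $T$ is dp-small (resp. dp-minimal) by definition precisely when this particular partial type is. There is no substantive obstacle here: the result is essentially a bookkeeping statement that treats dp-smallness as the ``one-sided $L(\mathcal{U})$-generalization'' of dp-minimality, and the only thing to check is that the formulas $\varphi_i(x)$ really do qualify as $L(\mathcal{U})$-formulas in the free variable $x$, which is immediate from the definition of $L(\mathcal{U})$.
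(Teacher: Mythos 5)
Your proof is correct and is exactly the paper's argument, just spelled out in full: the paper likewise observes that an ICT-pattern is already a witness to non-dp-smallness by taking $\varphi_i(x) := \varphi(x; a_i)$. Nothing further to add.
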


\begin{proof}
 Notice that an ICT-pattern is, in particular, a witness to non-dp-smallness (where all the $\varphi_i(x)$ happen to be $\varphi(x; a_i)$).
\end{proof}

Notice that Theorem \ref{thm_MainResults2} gives us examples of theories which are not dp-small but are dp-minimal.  For example, the theory of Presburger arithmetic, $\Th(\mathbb{Z}; +, <)$, and the theory of the $p$-adics, $\Th(\mathbb{Q}_p; +, \cdot, |)$.

So we have the following picture, where each implication is strict:
\[
 \text{VC-minimal} \Rightarrow \text{convexly orderable} \Rightarrow \\ \text{dp-small} \Rightarrow \text{dp-minimal}.
\]

We are now ready to prove Theorem \ref{thm_MainResults2}.  This basically involves minor tweaks to the proofs presented in \cite{fg2}, so we will skip some details here.  First, we tackle the theory of ordered groups.

\begin{proof}[Proof of Theorem \ref{thm_MainResults2} (1)]
 Let $T = \Th(G; \cdot, <)$ the theory of an ordered group.  If $G$ is abelian and divisible, then $G$ is o-minimal, hence VC-minimal, hence convexly orderable, hence dp-small.  Conversely, suppose $G$ is dp-small.  By Proposition \ref{Prop_wCOdpMin}, it is dp-minimal, hence by Proposition 3.3 of \cite{simon}, $G$ is abelian.  If $G$ is not divisible, suppose we have a prime $p$ so that $pG \neq G$.  Check that the formulas
 \[
  \varphi_i(x) = ( p^i | x ) \wedge ( p^{i+1} \not | x )
 \]
 and $\psi(x; y, z) = y < x < z$ witness that $T$ is not dp-small.  This amounts to showing that $\varphi_i(\mathcal{U})$ is cofinal in $G$, which is Lemma 3.3 of \cite{fg2}.
\end{proof}

Notice that Theorem \ref{thm_MainResults2} (2) follows as an immediate corollary, since $(F_+; \cdot, <)$ is a dp-small ordered group, hence is divisible by Theorem \ref{thm_MainResults2} (1).  We turn our attention to abelian groups.  First, we recall some definitions from \cite{fg2}.

Let $T = \Th(A; +)$ for $A$ some abelian group.  For definable subgroups $B_0, B_1 \subseteq A$, define $\precsim$ a quasi-ordering as follows:
\[
 B_0 \precsim B_1 \text{ if and only if } [ B_0 : B_0 \cap B_1 ] < \aleph_0.
\]
This generates an equivalence relation $\sim$.  Let $\PP(A)$ be the set of all p.p.-definable subgroups of $A$, which are finite intersections of subgroups of the form $\varphi_{k,m}(A)$ where
\[
 \varphi_{k,m}(x) = \exists y ( k \cdot y = m \cdot x ).
\]
Let $\PPt(A) = \PP(A) / \sim$.  Notice that $(\PPt(A); \precsim)$ is a partial order.

\begin{prop}[Corollary 4.12 of \cite{ADHMS2}]\label{prop_dpMinAG}
 The theory $T$ is dp-minimal if and only if $(\PPt(A); \precsim)$ is a linear order.
\end{prop}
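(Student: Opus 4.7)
The plan is to prove each direction via the correspondence between pp-definable subgroups and potential ICT-patterns.

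For the forward direction (dp-minimal implies $(\PPt(A); \precsim)$ is linear), I would argue the contrapositive. Pick $B_0, B_1 \in \PP(A)$ with $B_0 \not\precsim B_1$ and $B_1 \not\precsim B_0$, so both $[B_0 : B_0 \cap B_1]$ and $[B_1 : B_0 \cap B_1]$ are infinite. Working inside $B_0 + B_1$, this gives infinitely many distinct cosets of $B_0$ and infinitely many distinct cosets of $B_1$. Fix coset representatives $a_i, b_j \in B_0 + B_1$ for $i, j < \omega$, and consider the pp-formulas $\varphi(x; y) := B_0(x - y)$ and $\psi(x; z) := B_1(x - z)$. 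For any $i_0, j_0 < \omega$, since $a_{i_0} - b_{j_0} \in B_0 + B_1$, the intersection $(a_{i_0} + B_0) \cap (b_{j_0} + B_1)$ is nonempty, and any element from it realizes $\varphi(x; a_{i_0}) \wedge \psi(x; b_{j_0})$. Coset-distinctness ensures it also satisfies $\neg \varphi(x; a_i)$ for $i \neq i_0$ and $\neg \psi(x; b_j)$ for $j \neq j_0$, producing an ICT-pattern.

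For the reverse direction (linearity implies dp-minimality), I would again use contraposition and invoke the Baur--Monk theorem: every $L$-formula is equivalent modulo $T$ to a Boolean combination of pp-formulas and parameter-free index-counting sentences. Given an ICT-pattern witnessed by $\varphi(x; y)$, $\psi(x; z)$, $(a_i)$, and $(b_j)$, rewrite $\varphi$ and $\psi$ as Boolean combinations of pp-formulas in $x$ with parameters. Using Ramsey-type pigeonholing along each of the sequences $(a_i)$ and $(b_j)$, refine to an infinite subpattern in which a single pp-atom on each side controls the behavior: cosets $a_i + B_0$ of a fixed pp-definable subgroup $B_0$, and cosets $b_j + B_1$ of a fixed pp-definable subgroup $B_1$. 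The consistency of $(a_{i_0}+B_0) \cap (b_{j_0}+B_1)$ for all $i_0, j_0$, together with the distinctness that preserves the pattern, forces infinitely many cosets of $B_0$ to meet infinitely many cosets of $B_1$. This makes $[B_0 : B_0 \cap B_1]$ and $[B_1 : B_0 \cap B_1]$ both infinite, contradicting linearity.

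The main obstacle is the Baur--Monk reduction step in the reverse direction. The Boolean simplification must be pushed uniformly through all parameter instances, and one must extract, in a Ramsey-theoretic fashion, a single pp-atom on each side that inherits the ICT-pattern — naively, ICT-patterns can scatter across many atomic pieces, and one needs the pattern to persist within a single atom. Verifying that the surviving atomic pp-formulas genuinely define cosets of fixed pp-subgroups (so that the coset combinatorics of the forward direction applies verbatim) is the key technical hurdle; once this is done, the contradiction with linearity of $(\PPt(A); \precsim)$ follows directly.
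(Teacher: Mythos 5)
The paper does not give its own proof of this proposition: it is cited verbatim as Corollary 4.12 of Aschenbrenner--Dolich--Haskell--Macpherson--Starchenko (a closely related statement is Proposition 3.1 of Dolich--Goodrick--Lippel). So there is no in-paper argument to compare against; only the reference.

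Your forward direction is correct and is the standard argument. Incomparability of $B_0$ and $B_1$ under $\precsim$ yields, via the second isomorphism theorem, infinitely many distinct cosets of each of $B_0$ and $B_1$ inside $B_0+B_1$; the translates $\varphi(x;y)=B_0(x-y)$ and $\psi(x;z)=B_1(x-z)$, together with coset representatives $a_i,b_j\in B_0+B_1$, form an ICT-pattern because $(a_{i_0}+B_0)\cap(b_{j_0}+B_1)$ is nonempty (since $a_{i_0}-b_{j_0}\in B_0+B_1$) and pairwise disjointness of the chosen cosets gives the negative conditions.

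The reverse direction, however, has a genuine gap, which you flag but do not close. The proposed Ramsey reduction ``to a single pp-atom on each side'' does not go through as stated. Write $\varphi(x;y)\equiv\Phi(\theta_1(x;y),\dots,\theta_k(x;y))$ with $\Phi$ a boolean function and the $\theta_l$ pp-formulas, and let $H_l=\theta_l(A;0)$. By pigeonhole one does find some $l$ and an infinite $I$ on which the cosets $\theta_l(A;a_i)$, $i\in I$, are pairwise distinct (otherwise all $\varphi(A;a_i)$ would eventually coincide, killing the ICT-pattern). But the ICT-witness $c_{i_0,j_0}$ for the original pattern realizes the boolean combination $\varphi(x;a_{i_0})$, not necessarily the particular atom $\theta_l(x;a_{i_0})$; so one cannot conclude that $\theta_l$ together with $\psi$ still carries an ICT-pattern, and the ``inherits the pattern'' step fails. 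The real content of this implication is a combinatorial lemma: after intersecting the finitely many relevant $H_l$'s down to an actual inclusion chain $H_1\supseteq\cdots\supseteq H_k$ (possible mod finite index precisely because $(\PPt(A);\precsim)$ is linear), boolean combinations of nested cosets cannot support two independently varying rows. Your sketch identifies where the work lies but supplies none of it, so the direction remains unproved as written.
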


\begin{defn}[Definition 5.9 of \cite{fg2}]\label{defn_UpwardCoherence}
 For $X \in \PPt(A)$, we say $X$ is \emph{upwardly coherent} if there exists $B \in X$ such that, for all $B_1 \in \PP(A)$ with $B \precnsim B_1$, $B \subseteq B_1$.  We say the group $A$ is \emph{upwardly coherent} if every $X \in \PPt(A)$ is upwardly coherent.
\end{defn}

We are now ready to prove the next part of Theorem \ref{thm_MainResults2}.

\begin{proof}[Proof of Theorem \ref{thm_MainResults2} (3)]
 If $T$ is dp-minimal and $A$ is upwardly coherent, then by Theorem 5.11 of \cite{fg2}, $T$ is convexly orderable, hence dp-small.  Conversely, if $T$ is dp-small, then $T$ is dp-minimal by Proposition \ref{Prop_wCOdpMin}.  So suppose that $T$ is dp-minimal but $A$ is not upwardly coherent.  So fix $X \in \PPt(A)$ without upward coherence.  Follow the construction in the proof of Theorem 5.11 of \cite{fg2}.  This gives us subgroups $B \in X$ and
 \[
  A = A_0 \supseteq A_1 \supseteq A_2 \supseteq ... \text{ from } \PP(A)
 \]
 such that
 \begin{enumerate}
  \item for each $ i < \omega$, $A_i \cap B \neq A_{i+1} \cap B$ and
  \item for each $ i < \omega$, $[A_i : A_i \cap B ] \ge \aleph_0$.
 \end{enumerate}
 Now it is easy to see that $\varphi_i(x) = [ x \in (A_i \setminus A_{i+1}) ]$ and $\psi(x; y) = [ (x - y) \in B ]$ are a witness to non-dp-smallness.  For more details, see the proof of Lemma 5.7 of \cite{fg2}.
\end{proof}

Finally, we turn our attention to Theorem \ref{thm_MainResults2} (4).  However, as in \cite{fg2}, we will prove a more general result about simple interpretations.

\begin{defn}\label{defn_SimpleInterpret}
 Suppose $M$ and $N$ are structures in different languages, $A \subseteq M$, $S \subseteq M$ is $A$-definable, and $E \subseteq M \times M$ is an $A$-definable equivalence relation on $S$.  We say that $M$ \emph{simply interprets} $N$ over $A$ if the elements of $N$ are in bijection with $S / E$ and the relations on $S$ induced by the relations and functions on $N$ via this bijection are $A$-definable in $M$.
\end{defn}

\begin{lem}\label{Lem_SIwCO}
 If $M$ simply interprets $N$ over $\emptyset$ and $M$ is dp-small, then $N$ is dp-small.
\end{lem}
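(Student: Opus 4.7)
The plan is to argue by contrapositive: starting from a putative witness to non-dp-smallness of $N$, I will pull it back through the simple interpretation to contradict dp-smallness of $M$.

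First, I would record an easy preliminary: if $T = \Th(M)$ is dp-small, meaning $x=x$ is dp-small on the home sort of $M$, then \emph{every} partial type $\pi(x^*)$ on the home sort of $M$ is dp-small, since a witness to non-dp-smallness of $\pi$ is a fortiori a witness to non-dp-smallness of the trivial type $x^* = x^*$. Consequently, to prove the lemma it suffices to derive a contradiction from dp-smallness of the partial type ``$x^* \in S$'' in $M$.

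Next, given a witness $\varphi_i(x)$, $\psi(x;y)$, $b_j$ to non-dp-smallness of $N$, I would pull the relevant $L_N$-formulas back to $L_M$-formulas on $S$ via the simple interpretation. Writing $\varphi_i(x) = \hat{\varphi}_i(x; c_i)$ with $\hat{\varphi}_i$ an $L_N$-formula and $c_i \in \mathcal{U}_N$, the hypothesis that $M$ simply interprets $N$ over $\emptyset$ supplies $L_M$-formulas $\hat{\varphi}_i^*$ and $\psi^*$ (with each $N$-variable replaced by a $|y|$-tuple of $M$-variables ranging over $S$) whose satisfaction by representative tuples from $S(\mathcal{U}_M)$ reflects satisfaction of the original $L_N$-formulas by the corresponding classes in $\mathcal{U}_N$. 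I would then choose arbitrary representatives $\tilde{c}_i^*, \tilde{b}_j^* \in \mathcal{U}_M$ of $c_i$ and $b_j$, and set $\varphi_i^*(x^*) := \hat{\varphi}_i^*(x^*; \tilde{c}_i^*)$.

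The last step is verification: I would check that $\varphi_i^*$, $\psi^*$, $\tilde{b}_j^*$ witness non-dp-smallness of the partial type ``$x^* \in S$'' in $M$. Fix $i_0, j_0 < \omega$; by hypothesis there is $a \in \mathcal{U}_N$ realizing the $(i_0, j_0)$-grid pattern; choose any representative $a^* \in S(\mathcal{U}_M)$ of $a$. By the defining property of simple interpretation, $a^*$ realizes the pulled-back grid pattern together with the formula $x^* \in S$. This contradicts the preliminary observation.

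I do not expect any substantive obstacle; this is the standard ``interpretations preserve dp-smallness-style properties'' bookkeeping, parallel to the interpretation lemmas already present in \cite{fg2}. The only point requiring a moment of care is that parameters from $\mathcal{U}_N$ must first be lifted to representative tuples in $\mathcal{U}_M$ before being substituted, and this substitution is unambiguous precisely because the pulled-back formulas $\hat{\varphi}_i^*$ and $\psi^*$ are $E$-invariant by construction.
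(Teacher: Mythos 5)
Your proof is correct and follows essentially the same route as the paper: assume $N$ is not dp-small, translate the witnessing formulas and parameters into $L_M$ via the simple interpretation, observe that the realizations of the grid pattern in $N$ lift to representatives in $S$, and conclude $M$ is not dp-small. The only cosmetic difference is your detour through the partial type ``$x^* \in S$''; the paper skips this and directly exhibits the pulled-back pattern as a witness to the failure of dp-smallness of $x=x$, which works equally well since the realizations already live in $S$ and a larger consistent type remains consistent after dropping the conjunct $x^* \in S$.
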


\begin{proof}
 Let $\sigma : N \rightarrow S$ be the map given by simple interpretability (so $\sigma$ induces a bijection from $N$ to $S/E$).  Suppose there exists $\varphi_i(x; z_i)$ for $i < \omega$ and $\psi(x;y)$ in the language of $N$, $a_i \in N_{z_i}$ for $i < \omega$, and $b_j \in N_y$ for $j < \omega$ witnessing non-dp-smallness.  By definition, there exists $\varphi^*_i(x; z_i)$ for $i < \omega$ and $\psi^*(x;y)$ in the language of $M$ corresponding to $\varphi_i(x; z_i)$ and $\psi(x; y)$ respectively.  Then, one checks that $\varphi^*_i(x; \sigma(a_i))$ and $\psi^*(x; \sigma(b_j))$ witness to the fact that $M$ is not dp-small.
\end{proof}

\begin{proof}[Proof of Theorem \ref{thm_MainResults2} (4)]
 Let $T = \Th(K; +, \cdot, |)$ a Henselian valued field.  Check that the value group and residue field are simply interpretable in $K$.  Then, by Lemma \ref{Lem_SIwCO} and Theorem \ref{thm_MainResults2} (1), the value group is divisible.
\end{proof}

Moreover, when we include the ordering in the language, by Lemma \ref{Lem_SIwCO} and Theorem \ref{Thm_CoRC}, the residue field is real closed.  This, along with Theorem \ref{thm_MainResults2} (4), gives us Corollary \ref{Cor_CoVF}.  So, with this in mind, we switch gears and prove Theorem \ref{Thm_CoRC}.

\section{VC-Minimal Ordered Fields}\label{Sec_VCMinOF}

In this section, we prove Theorem \ref{Thm_CoRC}.  In order to do this, we follow the proof of Theorem 5.3 in \cite{mms}.  First, we give a trivial consequence of Theorem 3.6 of \cite{simon}, but this formulation is useful to us here.

\begin{lem}\label{Lem_PierresLemma}
 If $(G; <, +, ...)$ is a dp-minimal expansion of a divisible ordered abelian group and $X \subseteq G$ is definable, then $X$ is the union of finitely many points and an open set.
\end{lem}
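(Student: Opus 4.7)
The plan is to reduce this lemma directly to Simon's Theorem 3.6 of \cite{simon}, which studies the structure of unary definable sets in dp-minimal expansions of ordered abelian groups. The relevant content of that theorem, applied in the present setting, is that every definable subset $X \subseteq G$ has finite topological boundary with respect to the order topology on $G$. Given that input, the lemma follows formally and needs no further model-theoretic content.

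The execution is a one-step topological argument. Let $U = \mathrm{int}(X)$ denote the interior of $X$ in the order topology on $G$. By definition $U$ is open and $U \subseteq X$. A point $x \in X$ fails to lie in $U$ exactly when every neighborhood of $x$ meets $G \setminus X$, i.e.\ when $x \in \overline{G \setminus X}$, so
\[
 X \setminus U \;\subseteq\; X \cap \overline{G \setminus X} \;\subseteq\; \overline{X} \cap \overline{G \setminus X} \;=\; \partial X.
\]
Since $\partial X$ is finite by Simon's theorem, $X \setminus U$ is a finite set of points, and the decomposition $X = U \cup (X \setminus U)$ exhibits $X$ as the union of an open set and finitely many points, as required.

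The only real obstacle is cosmetic: one must confirm that Simon's Theorem 3.6 is available in precisely the formulation used above, namely that definable sets in a dp-minimal expansion of a divisible ordered abelian group have finite frontier in the order topology. Since the author describes the lemma as a trivial consequence of that theorem, I would simply check that the hypotheses match (dp-minimality of the expansion $(G;<,+,\ldots)$ and divisibility of the underlying ordered abelian group), and, if the theorem is stated in a slightly different form, record a one-line translation to the finite-boundary statement before invoking the topological argument above.
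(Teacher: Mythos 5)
Your proposal is correct and matches the paper's proof: both decompose $X$ into its interior together with the residual definable set $X \setminus \mathrm{int}(X)$, and invoke Simon's Theorem 3.6 to conclude the residual is finite. The only small difference is that the paper applies Simon's theorem in the form ``a definable subset with empty interior is finite'' directly to $X \setminus \mathrm{int}(X)$ (which plainly has empty interior), rather than routing through $\partial X$; your closing hedge about the exact formulation is apt, and the translation you anticipate is exactly that one-liner.
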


\begin{proof}
 Let $\mathrm{Int}(X) = \{ a \in X : (\exists b, c \in X)[b < a < c \wedge (\forall x \in X)(b < x < c \rightarrow x \in X)] \}$ and let $\mathrm{Ext}(X) = X \setminus \mathrm{Int}(X)$.  Then, since $\mathrm{Ext}(X)$ is a definable subset of a dp-minimal expansion of a divisible ordered abelian group with no interior, by Theorem 3.6 of \cite{simon}, $\mathrm{Ext}(X)$ is finite.  This gives the desired conclusion.
\end{proof}

Next, we will need a result from \cite{goodrick}.

\begin{lem}[Lemma 3.19 of \cite{goodrick}]\label{Lem_Lem319}
 Let $\mathfrak{G} = (G; +, <, ...)$ be a dp-minimal expansion of a divisible ordered abelian group.  Let $f : G \rightarrow \overline{G}_+$ be a definable function (to $\overline{G}_+$ the positive elements of the completion of $G$).  Then, for each open interval $I \subseteq G$, there exists an open interval $J \subseteq I$ and $\epsilon > 0$ such that, for all $a \in J$, $f(a) > \epsilon$.
\end{lem}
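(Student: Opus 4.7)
The plan is to argue by contradiction. Assume that for every open subinterval $J \subseteq I$ and every $\epsilon \in G_+$, there exists $a \in J$ with $f(a) \le \epsilon$.

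First I would show that for each $\epsilon \in G_+$, the set $Y_\epsilon = \{a \in I : f(a) > \epsilon\}$ is finite. Indeed, the complement $X_\epsilon = \{a \in I : f(a) \le \epsilon\}$ is dense in $I$ under our assumption, so $Y_\epsilon$ has empty interior. Applying Lemma \ref{Lem_PierresLemma} to the definable set $Y_\epsilon$, it decomposes as a finite set together with an open set; empty interior forces the open part to vanish, so $Y_\epsilon$ is finite.

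Next I would use this finiteness to produce a generic point whose $f$-value is forced to be pathological. Fix a small algebraically closed model $M$ over which $f$ and $I$ are defined. For each $\epsilon \in M_+$, $Y_\epsilon$ is a finite $M$-definable subset of $I$, so $Y_\epsilon \subseteq M$. By saturation of $\mathcal{U}$, pick $a \in I \setminus M$. Then $a \notin Y_\epsilon$ for any $\epsilon \in M_+$, giving $f(a) \le \epsilon$ for every $\epsilon \in M_+$. Thus $f(a)$ is a positive infinitesimal over $M$ in $\overline{G}$. On the other hand, $G$ is divisible, hence densely ordered, hence dense in $\overline{G}$, so there exists $\delta \in G_+$ with $0 < \delta < f(a)$; necessarily $\delta \notin M$, and $a \in Y_\delta$.

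The main obstacle is converting this type-theoretic picture into an explicit contradiction with dp-minimality. The idea is to iterate the construction along a sequence of independent generics $a_0, a_1, a_2, \ldots$ over successively larger models, with witnesses $\delta_0, \delta_1, \ldots$ from $G_+$, so as to build an ICT-pattern using the interval formula $\varphi(x; y_1, y_2) = [y_1 < x < y_2]$ (with pairwise disjoint intervals in $I$) against a formula controlling the $f$-value. Because $\psi(x; z) = [f(x) > z]$ is monotone in $z$, verifying that every cell is consistent will likely require a two-parameter bracketing formula $\psi(x; z_1, z_2) = [z_1 < f(x) < z_2]$ with value intervals chosen to isolate distinct $f$-values that our assumption guarantees on each subinterval. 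Pinning down this choice so that all rows and all cells work simultaneously is the delicate step I expect to require the most care.
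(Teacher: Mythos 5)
The paper does not prove this lemma: it is cited verbatim as Lemma 3.19 of \cite{goodrick}, and no argument for it appears in the present paper, so there is no in-paper proof to compare against. Evaluating your attempt on its own terms: the first two steps are correct, and the third is a genuine gap that you yourself flag.

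Steps one and two are fine. Under the assumption for contradiction, every $Y_\epsilon=\{a\in I : f(a)>\epsilon\}$ has empty interior, and Lemma \ref{Lem_PierresLemma} then forces it to be finite. Passing to a small model $M$ over which $f$ and $I$ are defined, each $Y_\epsilon$ for $\epsilon\in M_+$ lies inside $M$, so any $a\in I\setminus M$ has $f(a)$ a positive infinitesimal over $M$. All correct.

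The gap is the ICT-pattern. With $\varphi(x;y_1,y_2)=[y_1<x<y_2]$ ranging over pairwise disjoint subintervals $J_0,J_1,\dots\subseteq I$ and $\psi(x;z_1,z_2)=[z_1<f(x)<z_2]$ ranging over pairwise disjoint value windows $V_0,V_1,\dots$, the consistency of cell $(i,j)$ requires $f(J_i)\cap V_j\neq\emptyset$ \emph{for every} pair $(i,j)$. Your hypothesis only tells you that $\inf f(J_i)=0$ for each $i$; it does not tell you that $f(J_i)$ meets any particular window $V_j$. The sets $f(J_0),f(J_1),\dots$ could in principle cluster around entirely incompatible scales (each tending to zero, but ``skipping'' each other). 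Your generic-point iteration over successively larger models naturally gives you, for each $i$, one value $\delta_i$ that $f$ attains on $J_i$, but that only controls the diagonal cells $(i,i)$; nothing forces $f(J_i)$ to meet $V_j$ for $j\neq i$.

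This gap is fixable, but it needs an additional interleaving lemma that your proposal neither states nor proves: given finitely many subsets $S_1,\dots,S_n\subseteq \overline{G}_+$ each with infimum $0$, one can choose thresholds $t_1>t_2>\dots$ in $G_+$ so that each window $(t_{k+1},t_k)$ meets every $S_i$ (pick $s_i^{(1)}\in S_i$ below $t_1$ for each $i$, let $t_2\in G_+$ be below all the $s_i^{(1)}$ using density, pick $s_i^{(2)}\in S_i$ below $t_2$, and so on). Applied to $S_i=f(J_i)$, this gives $n$ disjoint windows each meeting every $f(J_i)$, which by compactness produces the full $\omega\times\omega$ pattern. Without this argument, ``pinning down this choice so that all rows and all cells work simultaneously'' is exactly the missing content, and the proof is not complete as written.
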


The next ingredient is a slight modification of Theorem 3.2 of \cite{goodrick}.  We need to be very careful, because this theorem as stated only works for definable functions to $G$ and not necessarily to $\overline{G}$.  However, a simple modification of the proof of Theorem 3.4 of \cite{mms} gives us the desired result.

\begin{lem}\label{Lem_LocalMonotone}
 Let $\mathfrak{G} = (G; +, <, ...)$ be a dp-minimal expansion of a divisible ordered abelian group and let $f : G \rightarrow \overline{G}$ be a definable function.  Then, for any $b \in G$, there exists $\delta > 0$ such that, on the interval $(b, b+\delta)$, the function $f$ is monotone.
\end{lem}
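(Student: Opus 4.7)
The plan is to follow the proof of Theorem 3.4 of \cite{mms} essentially verbatim, replacing each appeal to weak o-minimality by Lemma \ref{Lem_PierresLemma} and each implicit boundedness-below step by Lemma \ref{Lem_Lem319}. The author flags this as a ``simple modification,'' and both dp-minimal ingredients have been set up for precisely this purpose.

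Concretely, I would first introduce the three definable subsets of $G$
\begin{align*}
 I &= \{\, a \in G : (\exists \delta > 0)(\forall c_1, c_2)(a < c_1 < c_2 < a + \delta \to f(c_1) < f(c_2))\,\},\\
 D &= \{\, a \in G : (\exists \delta > 0)(\forall c_1, c_2)(a < c_1 < c_2 < a + \delta \to f(c_1) > f(c_2))\,\},\\
 C &= \{\, a \in G : (\exists \delta > 0)(\forall c_1, c_2)(a < c_1 < c_2 < a + \delta \to f(c_1) = f(c_2))\,\},
\end{align*}
consisting of those $a$ at which $f$ is right-locally strictly increasing, strictly decreasing, or constant. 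These are definable in $\mathfrak{G}$ even though $f$ has codomain $\overline{G}$: the relations $f(c_1) < f(c_2)$ and $f(c_1) = f(c_2)$ are expressible by quantification over elements of the dense subset $G \subseteq \overline{G}$. The lemma then reduces to showing that the bad set $W = G \setminus (I \cup D \cup C)$ does not contain $b$ (equivalently, that $W$ is empty).

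By Lemma \ref{Lem_PierresLemma}, $W$ decomposes as a finite set together with an open set. To rule out nonempty interior, I would follow \cite{mms}: starting from a hypothetical open interval $(p,q) \subseteq W$, extract a failure of the monotonicity structure and use it to contradict dp-minimality. Each place where \cite{mms} uses weak o-minimality to cell-decompose a definable subset of $G$ is replaced by Lemma \ref{Lem_PierresLemma}, and each place where it produces a positive lower bound on an $\overline{G}$-valued function is replaced by Lemma \ref{Lem_Lem319}, applied to an appropriate difference or oscillation function such as $a \mapsto f(a) - f(a_0)$ on the definable subset where this is positive. Isolated points of $W$ are then dispatched by applying Lemma \ref{Lem_PierresLemma} once more to partition a small right-neighborhood of $b$ (which must lie in $I \cup D \cup C$) into finitely many maximal subintervals, the leftmost of which supplies the desired $\delta$.

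The main obstacle I expect is bookkeeping: tracking each appeal to weak o-minimality in \cite{mms} and verifying that its weaker dp-minimal substitute (Lemma \ref{Lem_PierresLemma} or Lemma \ref{Lem_Lem319}) still delivers exactly what the argument needs. A secondary subtlety is the gluing of right-local strict monotonicity into honest monotonicity on $(b, b+\delta)$: jumps of $f$ at transition points must be excluded, which I would handle by a standard supremum argument combined with Lemma \ref{Lem_Lem319} applied to the relevant jump-gap function.
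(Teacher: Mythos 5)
Your proposal and the paper's proof follow the same overall strategy (adapting Theorem 3.4 of \cite{mms} by substituting Lemma \ref{Lem_PierresLemma} for weak o-minimal decompositions and Lemma \ref{Lem_Lem319} for positive lower bounds on a definable ``gap'' function), and you identify both ingredients correctly. The one organizational difference worth noting is the choice of trichotomy. You work directly with the strong sets $I, D, C$ of points at which $f$ is actually strictly increasing, decreasing, or constant on a right interval; their complement $W$ is not automatically small, so ruling out interior for $W$ and then dispatching a possible isolated point $b \in W$ each require a separate argument. The paper instead uses the weaker pointwise conditions $\varphi_0(x), \varphi_1(x), \varphi_2(x)$ comparing $f(y)$ to $f(x)$ on a right interval of $x$; these exhaust $G$ by a one-line application of Lemma \ref{Lem_PierresLemma}, and the strong statement is then recovered by showing that the locus $\chi$ (your $G \setminus I$, restricted to the $\varphi_0$-part) is finite near $b$, via the auxiliary function $g(x) = \sup\{z - x : (\forall y \in (x,z))\,f(x) < f(y)\}$ and Lemma \ref{Lem_Lem319}. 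The ``jump-gap function'' you propose for the gluing step at the end is precisely this $g$, so the two arguments converge on the same technical point; the paper's ordering (cheap exhaustive trichotomy first, then a single invocation of $g$ plus Lemma \ref{Lem_Lem319}) just avoids running that argument twice. Modulo that bookkeeping, your plan is sound and is essentially the paper's.
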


\begin{proof}
 By Lemma \ref{Lem_PierresLemma}, for each $x \in G$, one of the following holds:
 \begin{enumerate}
  \item $\varphi_0(x) = (\exists x_1 > x)(\forall y)[x < y < x_1 \rightarrow f(x) < f(y)]$,
  \item $\varphi_1(x) = (\exists x_1 > x)(\forall y)[x < y < x_1 \rightarrow f(x) = f(y)]$, or
  \item $\varphi_2(x) = (\exists x_1 > x)(\forall y)[x < y < x_1 \rightarrow f(x) > f(y)]$.
 \end{enumerate}
 Again by Lemma \ref{Lem_PierresLemma}, there exists $\delta_1 > 0$ and $i=0,1,2$ such that, for all $a \in (b, b + \delta_1)$, $\models \varphi_i(a)$.  If $i = 1$, we are done.  Without loss of generality, suppose $i=0$ holds.  Let
 \[
  \chi(x) = (\forall x_1 > x)(\exists y, z)[x < y < z < x_1 \wedge f(y) \ge f(z)].
 \]
 We show that $\chi((b, b+\delta_1))$ is finite.  By Lemma \ref{Lem_PierresLemma}, it suffices to show that $\chi((b, b+\delta_1))$ has no interior.  So, suppose there exists an open interval $J \subseteq \chi((b, b+\delta_1))$.  Define a function $g : J \rightarrow \overline{G}_+$,
 \[
  g(x) = \sup \{ z - x : z \in J, x < z, (\forall y \in (x, z))(f(x) < f(y)) \}.
 \]
 Since $\models \varphi_0(a)$ for all $a \in J$, $g(a) > 0$.  Fix $\epsilon > 0$, $a, c \in J$ with $a < c$ and $c - a < \epsilon$.  Since $\chi(a)$, there exists $d \in (a,b) \subseteq J$ such that $f(a) \ge f(d)$.  Hence, $g(a) < \epsilon$.  This contradicts Lemma \ref{Lem_Lem319}.
 
 Therefore, $\chi((b, b+\delta_1))$ is finite, hence there exists $\delta > 0$ such that $f$ is strictly increasing on $(b, b+\delta)$.  The proof works similarly when $i = 2$ (and we get $f$ locally strictly decreasing after $b$).
\end{proof}

The following theorem is a simple generalization of Lemma \ref{Lem_PierresLemma} (Theorem 3.6 of \cite{simon}) and Lemma \ref{Lem_Lem319} (Lemma 3.19 of \cite{goodrick}) respectively.

\begin{thm}\label{Thm_GeneralizeDPMinSets}
 Let $\mathfrak{G} = (G; +, <, ...)$ be a dp-minimal expansion of a divisible ordered abelian group.  Then, for each $n \ge 1$,
 \begin{itemize}
  \item [ (1)${}_n$ ] If $X \subseteq G^n$ is a definable set with non-empty interior and $X = X_1 \cup ... \cup X_r$ is a definable partition of $X$, then, for some $i$, $X_i$ has non-empty interior.
  \item [ (2)${}_n$ ] If $f : G^n \rightarrow \overline{G}_+$ is a definable function, then for each open box $B \subseteq G^n$ there exists an open box $B' \subseteq B$ and $\epsilon > 0$ so that, for all $x \in B'$, $f(x) > \epsilon$.
 \end{itemize}
\end{thm}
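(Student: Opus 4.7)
My plan is to prove (1)${}_n$ and (2)${}_n$ simultaneously by induction on $n$, with base cases $n=1$ handled by Lemma \ref{Lem_PierresLemma} and Lemma \ref{Lem_Lem319}. Throughout I write a point of $G^n$ as $(a, b)$ with $a \in G$ and $b \in G^{n-1}$, and combine the one-dimensional statements on the $a$-axis with the $(n-1)$-dimensional statements on the fibers.

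For (1)${}_n$: by restricting $X$ to a sub-box of its interior, reduce to the case $X = I \times V$ with $I \subseteq G$ an open interval and $V \subseteq G^{n-1}$ an open box. For each $a \in I$, the fiber partition $V = (X_1)_a \cup \cdots \cup (X_r)_a$ has a part with non-empty interior by (1)${}_{n-1}$, so the definable sets $E_i = \{a \in I : (X_i)_a \text{ has non-empty interior in } V\}$ cover $I$; by (1)${}_1$, some $E_i$ contains an open interval $J$. Fix this $i$ and define the in-radius function
\[
 f(a) = \sup \{ \epsilon > 0 : (\exists b \in V)\ b + [-\epsilon, \epsilon]^{n-1} \subseteq (X_i)_a \};
\]
by (2)${}_1$, there is an open subinterval $J' \subseteq J$ and $\epsilon_0 > 0$ with $f(a) > \epsilon_0$ for all $a \in J'$. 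The key remaining step, which I expect to be the main obstacle, is to extract a \emph{uniform center}: an open subinterval $K \subseteq J'$ and a fixed $b^* \in V$ with $b^* + [-\epsilon_0/2, \epsilon_0/2]^{n-1} \subseteq (X_i)_a$ for every $a \in K$. Once obtained, the open box $K \times (b^* + (-\epsilon_0/2, \epsilon_0/2)^{n-1})$ lies in $X_i$. To obtain it, consider the definable set $S = \{(a, b) \in J' \times V : b + [-\epsilon_0/2, \epsilon_0/2]^{n-1} \subseteq (X_i)_a\}$, whose $a$-fibers each contain a box of radius $\epsilon_0/2$. If some $b$-fiber $S^{b^*}$ has non-empty interior in $J'$, take $K \subseteq S^{b^*}$; otherwise every $S^b$ is finite by Lemma \ref{Lem_PierresLemma}, and a fiber-counting argument applying (1)${}_{n-1}$ to the induced partition of $J' \times V$, playing the uniformly large $a$-fibers of $S$ against the finite $b$-fibers, should deliver a contradiction.

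For (2)${}_n$: again slice $B = I \times B'$. By (2)${}_{n-1}$ applied fiberwise, for each $a \in I$ there is a sub-box $B''(a) \subseteq B'$ and $\epsilon(a) > 0$ with $f(a, y) > \epsilon(a)$ on $B''(a)$. Parameterizing sub-boxes by their center and radius (an $n$-dimensional parameter depending on $a$), the same uniform-center argument from (1)${}_n$ pins down a fixed sub-box $B''$ and an open interval $I' \subseteq I$ on which the witness continues to hold for $y \in B''$, and then (2)${}_1$ applied to the definable function $a \mapsto \inf_{y \in B''} f(a, y)$ on $I'$ yields a uniform lower bound $\epsilon_* > 0$. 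As in (1)${}_n$, uniformizing the multi-dimensional witness over the one-dimensional $a$-axis is the delicate step, and resolving it for (1)${}_n$ is what drives the entire joint induction.
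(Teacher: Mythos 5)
Your high-level plan---simultaneous induction on $n$ with Lemmas \ref{Lem_PierresLemma} and \ref{Lem_Lem319} as base cases---matches the paper's, but your slicing direction creates a genuine obstruction that you flag but do not resolve. You slice $X \subseteq G^n$ along the \emph{first} coordinate, so your fibers $(X_i)_a$ live in $G^{n-1}$: finding a uniform $(n-1)$-dimensional box inside all of them requires pinning down a moving center in $G^{n-1}$, which is the ``delicate step'' you acknowledge. The paper slices along the \emph{last} coordinate instead: it projects $\pi : G^n \to G^{n-1}$ and, for a fixed anchor $a \in G$, sets $Z_i = \{b \in \pi(B) : (\exists a')(\forall x)(a < x < a' \rightarrow \langle b,x\rangle \in X_i)\}$. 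Because the $1$-dimensional fibers $(X_i)|_b$ are intervals with the left endpoint pinned at $a$, there is no uniform-center problem at all---only a uniform \emph{length} to control, which $(2)_{n-1}$ applied to $f(b) = \sup\{a'-a\}$ handles directly. Your fiber-counting sketch for the uniform center is not a proof: knowing that every $a$-fiber of your set $S$ contains a box of radius $\epsilon_0/2$ and that every $b$-fiber is finite does not by itself contradict $(1)_{n-1}$, since $(1)_{n-1}$ concerns partitions of a set with interior, and you have not exhibited such a partition whose pieces all lack interior. Without a Fubini-type tool you cannot conclude; the paper's decomposition is precisely what makes Fubini unnecessary.

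Your $(2)_n$ also diverges materially: the paper does \emph{not} iterate the box argument, but instead invokes Lemma \ref{Lem_LocalMonotone} (local monotonicity of definable one-variable functions into $\overline{G}$) in each coordinate, then uses $(1)_n$ and $(2)_{n-1}$ to make the monotonicity type and the monotonicity window uniform over a sub-box $B' = I_1 \times \cdots \times I_n$, and finally reads off the lower bound as $f(b) \ge f(k_1,\dots,k_n) > 0$ with $k_j$ an appropriate endpoint of $I_j$. Your proposal never invokes local monotonicity, and the uniformization you outsource to the $(1)_n$ argument is itself the unresolved step. So there is a real gap, and it is the same gap in both halves: you need either the paper's anchored-interval decomposition for $(1)_n$ and the monotonicity lemma for $(2)_n$, or a genuinely new uniformization argument that you have not supplied.
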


In particular, this holds for all dp-small expansions of an ordered group by Theorem \ref{thm_MainResults2} (1).  This proof is easily adapted from the proof of Theorem 4.2 and Theorem 4.3 in \cite{mms}.

\begin{proof}[Proof of Theorem \ref{Thm_GeneralizeDPMinSets}]
 By simultaneous induction on $n$.  Note that (1)${}_1$ holds by Lemma \ref{Lem_PierresLemma} and (2)${}_1$ holds by Lemma \ref{Lem_Lem319}.  Suppose (1)${}_{n-1}$ and (2)${}_{n-1}$ holds.
 
 (1)${}_n$:  Let $X \subseteq G^n$ be a definable set with non-empty interior and $X = X_1 \cup ... \cup X_r$ be a definable partition of $X$.  Fix $B \subseteq X$ an open box and let $\pi : G^n \rightarrow G^{n-1}$ be the projection onto the first $n-1$ coordinates.  Choose any $a \in G$ so that $(b, a) \in B$ for some $b \in G^{n-1}$.  For each $i$, let
 \[
  Z_i = \{ b \in \pi(B) : (\exists a')(\forall x)(a < x < a' \rightarrow \langle b,x \rangle \in X_i) \}.
 \]
 We claim that $Z_i$ is a partition of $\pi(B)$.  To see this, take $b \in \pi(B)$ and consider
 \[
  Y_i = X_i |_b \cap (a, \infty) = \{ x > a : \langle b, x \rangle \in X_i \}.
 \]
 Since the $Y_i$ form a partition of $X |_b \cap (a, \infty)$ which contains $B |_b$, by Lemma \ref{Lem_PierresLemma}, there exists $i$ and $a' > a$ so that $\{ (b,x) : a < x < a' \} \subseteq X_i$.  Hence $b \in Z_i$.
 
 By (1)${}_{n-1}$ on the $Z_i$, there exists an open box $B^* \subseteq \pi(B)$ and $i$ so that $B^* \subseteq Z_i$.  By replacing $B$ with $(B^* \times G) \cap B$, we may assume that $\pi(B) \subseteq Z_i$.  For each $b \in \pi(B)$, let $f(b)$ be the supremum of $a' - a$ for all $a'$ as in the definition of $Z_i$.  If $b \in (G^{n-1} \setminus \pi(B))$, set $f(b) = \infty$.  Thus, clearly $f(b) > 0$ for all $b \in G^{n-1}$.  By (2)${}_{n-1}$, there exists $B' \subseteq \pi(B)$ and $\epsilon > 0$ so that, for all $x \in B'$, $f(x) > \epsilon$.  Therefore, $B' \times (a, a+\epsilon) \subseteq X_i$ hence $X_i$ has non-empty interior.
 
 (2)${}_n$: Let $f : G^n \rightarrow \overline{G}$ be a definable function and fix an open box $B \subseteq G^n$.  For each $b = \langle b_1, ..., b_n \rangle \in B$ and each $j = 1, ..., n$, let $B_j = \{ a \in G : \langle b_1, ..., b_{j-1}, a, b_{j+1}, ..., b_n \rangle \in B \}$, define a function $g_{b,j} : B_j \rightarrow \overline{G}$ as follows:
 \[
  g_{b,j}(a) = f(b_1, ..., b_{j-1}, a, b_{j+1}, ..., b_n).
 \]
 By Lemma \ref{Lem_LocalMonotone}, there exists $\delta \in G_+$ so that, on $(b_j, b_j + \delta)$, the function $g_{b,j}$ is monotonic (either strictly increasing, strictly decreasing, or constant).  As in the proof of Theorem 4.3 of \cite{mms}, we first use (1)${}_n$ on the type of monotonicity of $g_{b,j}$ after $b_j$ for each $j$ to reduce to a closed box $B'$ such that, for each $j$, either
 \begin{itemize}
  \item $g_{b,j}$ is strictly increasing locally after $b_j$ for all $b \in B'$,
  \item $g_{b,j}$ is strictly decreasing locally after $b_j$ for all $b \in B'$, or
  \item $g_{b,j}$ is constant locally after $b_j$ for all $b \in B'$.
 \end{itemize}
 Fix $c = \langle c_1, ..., c_n \rangle \in B'$.  For each $j$, let $\pi_j$ be the projection of $G^n$ onto $G^{n-1}$ by removing the $j$th coordinate, define a function $F_j : \pi_j(B') \rightarrow \overline{G}$ by setting $F_j(b_1, ..., b_{j-1}, b_{j+1}, ..., b_n)$ as the supremum of all $\delta \in G_+$ such that $g_{\langle b_1, ... b_{j-1}, c_j, b_{j+1}, ..., b_n\rangle,j}$ is monotone on the interval $(c_j, c_j + \delta)$.  Hence, $F_j(b) > 0$ for all $b \in \pi(B')$.  As in the proof of Theorem 4.3 of \cite{mms}, use (2)${}_{n-1}$ to shrink $B'$ to an open box such that, for all $j$ and $b \in B'$, $g_{b,j}$ is monotone on the projection of $B'$ to the $j$th coordinate (the smallest corner is $c$).
 
 Let $B' = I_1 \times ... \times I_n$.  For each $j$, if $g_{b,j}$ is non-decreasing on $I_j$ (for any equivalently all $b \in B'$), set $k_j$ to be the left endpoint of $I_j$ (i.e., $k_j = c_j$).  Otherwise, set $k_j$ to be the right endpoint of $I_j$.  Notice that, for all $b \in B'$, $f(b) \ge f(k_1, ..., k_n) > 0$.  This gives us the desired conclusion.
\end{proof}

\begin{cor}\label{Cor_BetweenTwoFunctions}
 Let $\mathfrak{G} = (G; +, <, ...)$ be a dp-minimal expansion of a divisible ordered abelian group and fix $n \ge 1$.  Suppose that $B$ is an open box and $f, h : B \rightarrow \overline{G}$ are definable functions such that $h(x) < f(x)$ for all $x \in B$.  If $f$ or $h$ is continuous, then $\{ \langle x, y \rangle : x \in B, h(x) < y < f(x) \}$ has non-empty interior.
\end{cor}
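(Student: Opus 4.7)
The plan is to apply Theorem \ref{Thm_GeneralizeDPMinSets}(2)$_n$ to the ``gap'' function $d(x) = f(x) - h(x)$, and then use the continuity hypothesis to squeeze an open box into the resulting uniform strip. By composing with $y \mapsto -y$ (which swaps the roles of the upper and lower bounding functions) we may assume throughout that $h$ is the continuous one.

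The first task is to exhibit $d$ as a definable function into $\overline{G}_+$, so that Theorem \ref{Thm_GeneralizeDPMinSets}(2)$_n$ applies. Extending $d$ outside of $B$ in any definable way (for instance, $d(x) = 1$ for $x \notin B$), the key observation is that for $x \in B$ and $c \in G$, the condition $d(x) > c$ is equivalent to $h(x) + c < f(x)$, which by density of $G$ in $\overline{G}$ can be rewritten as
\[
 (\exists b \in G)\bigl[ h(x) < b \,\wedge\, b + c < f(x) \bigr].
\]
Since $f$ and $h$ are definable functions into $\overline{G}$, each conjunct is an $L(\mathcal{U})$-formula in $(x, b, c)$, so the level-sets of $d$ are definable; moreover $d(x) > 0$ for all $x \in B$ by hypothesis. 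Theorem \ref{Thm_GeneralizeDPMinSets}(2)$_n$ therefore produces an open sub-box $B' \subseteq B$ and $\epsilon \in G_+$ with $f(x) - h(x) > \epsilon$ for every $x \in B'$.

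Now fix any $x_0 \in B'$. Using that $G$ is dense in the Dedekind completion $\overline{G}$, choose $u_1, u_2 \in G$ with $u_1 < h(x_0) < u_2$ and $u_2 - u_1 < \epsilon/2$. Continuity of $h$ at $x_0$ yields an open box $B'' \subseteq B'$ containing $x_0$ on which $u_1 < h(x) < u_2$. For any such $x$ and any $y$ in the (non-empty) interval $(u_2,\, u_1 + \epsilon)$, we then have
\[
 h(x) < u_2 < y < u_1 + \epsilon < h(x) + \epsilon < f(x),
\]
so the open box $B'' \times (u_2,\, u_1 + \epsilon)$ lies inside $\{\langle x, y \rangle : x \in B,\ h(x) < y < f(x) \}$, witnessing non-empty interior.

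The main subtlety I anticipate is the very first step: carefully recasting $d$ as a bona fide definable function into $\overline{G}_+$ in the sense of Theorem \ref{Thm_GeneralizeDPMinSets}, given that $f$ and $h$ themselves take values in the completion $\overline{G}$ rather than in $G$. Once this is justified, the rest of the proof is a routine continuity-plus-density squeeze that does not use the dp-minimality hypothesis again.
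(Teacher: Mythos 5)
Your proposal is correct and follows essentially the same approach as the paper: apply Theorem~\ref{Thm_GeneralizeDPMinSets}(2)$_n$ to the gap function $f - h$ (extended outside $B$) to obtain a uniform lower bound $\epsilon$ on a sub-box, then use continuity of $h$ to wedge in a small box. Your version is slightly more careful than the paper's on two technical points — you explicitly argue that the level-sets of $d$ are definable despite $f,h$ taking values in $\overline{G}$, and you choose the interval endpoints $u_2$ and $u_1+\epsilon$ in $G$ rather than in $\overline{G}$ — but the argument is otherwise identical.
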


\begin{proof}
 Define $g : G^n \rightarrow G$ as follows
 \[
  g(x) =
  \begin{cases}
   f(x) - h(x) & \text{ if } x \in B, \\
   \infty & \text{ if } x \notin B.
  \end{cases}
 \]
 Since $g(x) > 0$ for all $x \in G^n$, by Theorem \ref{Thm_GeneralizeDPMinSets} (2)${}_n$, there exists an open box $B' \subseteq B$ and $\epsilon > 0$ so that, for all $x \in B'$, $g(x) > \epsilon$.  Since the argument is symmetric, assume $h$ is continuous.  Choose any $a \in B'$.  Since $h$ is continuous, there exists an open box $B'' \subseteq B'$ containing $a$ such that, for all $x \in B''$, $|h(x) - h(a)| < \epsilon / 3$.  It is easy to verify that $B'' \times (h(a) + \epsilon / 3, h(a) + 2 \epsilon / 3)$ is an open box contained in $\{ \langle x, y \rangle : x \in B, h(x) < y < f(x) \}$, as desired.
\end{proof}

Following the outline of \cite{mms}, we prove an analog of their Proposition 5.4 for dp-minimal ordered fields.

\begin{prop}\label{Prop_54dpmin}
 Let $F$ be a dp-minimal ordered field with real closure $R$, fix $\alpha \in R$, and suppose that for each $\epsilon \in R$ with $\epsilon > 0$, there exists $b \in F$ such that $|\alpha - b| < \epsilon$.  Then, $\alpha \in F$.
\end{prop}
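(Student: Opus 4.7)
The plan. Suppose for contradiction that $\alpha \in R \setminus F$. Let $p(x) \in F[x]$ be the minimal polynomial of $\alpha$ over $F$; since $F$ has characteristic zero and $p$ is irreducible of degree at least $2$, $p$ is separable (so $p'(\alpha) \neq 0$) and has no $F$-root. Without loss of generality $p'(\alpha) > 0$, so $p$ is strictly increasing on some $R$-interval $(a_0, a_1) \ni \alpha$ with $p(a_0) < 0 < p(a_1)$. Using the density hypothesis and the pigeonhole principle, I extract a sequence $b_n \in F$ with $b_n \downarrow \alpha$ from above (the case from below is symmetric), all lying in $(a_0, a_1)$, so that $p(b_n) > 0$ and $p(b_n) \to 0$ in $R$.

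The first structural step applies Lemma \ref{Lem_PierresLemma} to the definable set $S = \{b \in F : p(b) > 0\}$. Then $S$ decomposes as the union of its $F$-open interior $T$ and a finite set $E$. Since $p(b_n) > 0$ and only finitely many $b_n$ can lie in $E$, for all large $n$ we have $b_n \in T$, hence each such $b_n$ is contained in some $F$-open interval $(c_n, d_n) \subseteq T$. Using that $p$ has no $F$-root together with continuity of $p$ in $R$ and the monotonicity choice, a sign analysis shows that $c_n \in F$ (finite), $c_n < \alpha$, and $(c_n, \alpha) \cap F = \emptyset$; otherwise some $F$-element just above $c_n$ would force $p < 0$, contradicting $p > 0$ on the $F$-interval. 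The gap then forces $\alpha - c_n$ to be $F$-infinitesimal, since otherwise $c_n + \delta$ for a suitable $\delta \in F_+$ would populate the supposed gap.

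With this setup in hand, the final and main step is to derive a contradiction. The plan is to construct an ICT pattern in $F$ from two definable families that cut near $\alpha$: namely, (a) order cuts via the disjoint shells $(b_{n+1}, b_n)$ with parameters from the approximating sequence, and (b) polynomial-value cuts $\{z_1 < p(x) < z_2\}$ for suitable parameters $z_i \in F$. The main obstacle is independence: since $p(x) \approx p'(\alpha)(x - \alpha)$ near $\alpha$, the direct and polynomial cuts are linearly related, and most cells of the product partition are automatically empty unless parameters are chosen carefully. I would attempt to overcome this by exploiting the non-Archimedean structure around the $F$-infinitesimal gap, selecting the second family at a genuinely different scale, for example via Newton-iteration-style refinement $b_n^{(1)} = b_n - p(b_n)/p'(b_n) \in F$ for which $|b_n^{(1)} - \alpha|$ is of order $|b_n - \alpha|^2$, so that the two families cut at independent rates and the ICT pattern is consistent cell-by-cell. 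Alternatively, one can bypass the explicit ICT construction by feeding the strictly positive definable function $f(b) = p(b)^2$, whose values along $b_n$ approach zero, into Theorem \ref{Thm_GeneralizeDPMinSets}(2) or Corollary \ref{Cor_BetweenTwoFunctions} applied to an $F$-interval carefully chosen to straddle the gap, and extracting a quantitative contradiction from the structural lower bound those results provide on a positive definable function over an open box.
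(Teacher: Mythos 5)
Your proposal diverges fundamentally from the paper's proof, which follows Marker--MacPherson--Steinhorn and uses the Macintyre--McKenna--Van den Dries device (Lemma \ref{Lem_Prop54Lem}): rather than working directly with the one-variable minimal polynomial, one replaces it by the $n$-variable system $G = \langle G_1,\dots,G_n\rangle$ built from all conjugates of $\alpha$, finds an $F$-point $d$ where the Jacobian $J_G$ does not vanish, applies the multi-variable open-mapping result (Proposition \ref{Prop_59dpmin}) to show $G(U)$ has nonempty interior, and derives a contradiction from the resulting $F$-root of $g(c,y)$. That passage to $n$ dimensions is essential: the minimal polynomial has nonvanishing derivative only at $\alpha \notin F$, whereas the dp-minimal open-mapping machinery requires nondegeneracy at an $F$-rational point, and the MMV construction is precisely what produces such a point.

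There are two genuine gaps in what you wrote. First, the claimed ``$F$-infinitesimal gap'' need not exist: the left endpoint $c_n$ of the maximal component of the open set $T$ containing $b_n$ lives in the order completion $\overline{F}$, not necessarily in $F$. When the cut that $\alpha$ determines in $F$ is irrational -- no nearest $F$-element from below -- there is no $c_n \in F$ with $(c_n,\alpha)\cap F = \emptyset$, and the rest of your structural setup has nothing to stand on. Second, your final step (where the proof must actually happen) is left as a plan, and both sketches fail as stated. The ICT idea cannot work because near $\alpha$ the order cuts and the polynomial-value cuts determine literally the same cuts in $F$ (monotonicity of $p$); passing to the Newton-iteration scale produces \emph{nested} families, not the crosswise-independent grid an ICT pattern needs, so nearly all cells are empty. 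And feeding $p^2$ into Theorem \ref{Thm_GeneralizeDPMinSets}(2) or Corollary \ref{Cor_BetweenTwoFunctions} over an interval straddling the cut only yields \emph{some} subinterval $J$ and $\epsilon>0$ with $p^2 > \epsilon$ on $J$; nothing forces $J$ to be near $\alpha$ or to contain any of your $b_n$, so no contradiction follows. You have correctly identified the linear-dependence obstacle, but the resolution in the paper is to change dimension, not to separate scales.
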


This follows from the analog of Proposition 5.9 of \cite{mms} for dp-minimal ordered fields.

\begin{prop}\label{Prop_59dpmin}
 Fix $F$ a dp-minimal ordered field.  Let $p = \langle p_1, ..., p_n \rangle$ be an element of $F[x_1, ..., x_n]^n$ and $a \in F^n$ with $J_p(a) \neq 0$ (the Jacobian of $p$ at $a$).  Then, for every box $U \subseteq F^n$ containing $a$, the set $p(U)$ has non-empty interior.
\end{prop}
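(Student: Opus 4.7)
The plan is to induct on $n$, following the template of Proposition 5.9 of \cite{mms} but replacing each appeal to weak o-minimality by the dp-minimal tools already developed (Lemma \ref{Lem_PierresLemma}, Lemma \ref{Lem_LocalMonotone}, Theorem \ref{Thm_GeneralizeDPMinSets}, Corollary \ref{Cor_BetweenTwoFunctions}).

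For the base case $n = 1$, the derivative $p_1'$ is itself a polynomial, hence continuous in the ordered field $F$, and since $p_1'(a_1) \neq 0$ it has constant sign on some open interval $I \subseteq U$ around $a_1$. Thus $p_1|_I$ is strictly monotone and $p_1(I)$ is an infinite definable subset of $F$; by Lemma \ref{Lem_PierresLemma} it must contain an open interval, which is the desired interior.

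For the inductive step, I first normalize by postcomposing $p$ with the invertible affine map $y \mapsto (J_p(a))^{-1}(y - p(a)) + a$; this preserves non-empty interior, so we may assume $p(a) = a$ and $J_p(a) = I_n$. After a permutation of variables, $\partial p_n/\partial x_n(a) = 1$, and applying Lemma \ref{Lem_LocalMonotone} together with parametric bounds supplied by Theorem \ref{Thm_GeneralizeDPMinSets} produces an open sub-box $B \subseteq U$ containing $a$, an open interval $J \ni p_n(a)$, and a continuous definable function $\phi : \pi(B) \times J \to F$ (where $\pi$ is projection off the last coordinate) with $p_n(x', \phi(x', y)) = y$ throughout. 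Set $Q(x', y) = (p_1(x', \phi(x', y)), \ldots, p_{n-1}(x', \phi(x', y)))$. By the chain rule, the Jacobian of $x' \mapsto Q(x', p_n(a))$ at $\pi(a)$ equals the Schur complement of the lower-right $1 \times 1$ block of $J_p(a) = I_n$, namely $I_{n-1}$, which is invertible. Applying the induction hypothesis to $Q(\cdot, p_n(a))$ then gives a box whose image has non-empty interior in $F^{n-1}$, and combining this with variation of $y$ over $J$ via Corollary \ref{Cor_BetweenTwoFunctions} yields an open box inside $p(B) \subseteq p(U)$.

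The main obstacle is that the induction hypothesis as stated applies only to polynomial maps, while the coordinates of $Q$ are polynomials composed with the implicitly defined function $\phi$ and are only definable in general. I would resolve this by strengthening the statement of Proposition \ref{Prop_59dpmin} at the outset so that it covers all definable maps whose components are obtained from polynomials by iterated implicit solutions of polynomial equations with non-vanishing partial derivative; every ingredient used in the plan --- continuity of partials, local monotonicity, uniform estimates on open boxes, and the interior-production step of Corollary \ref{Cor_BetweenTwoFunctions} --- was established in the preceding results at the level of arbitrary definable functions, so the same induction carries the stronger statement, and Proposition \ref{Prop_59dpmin} is recovered in the case where no implicit-function step is performed.
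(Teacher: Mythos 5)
Your route diverges from the paper's at the inductive step, and it hits a real obstacle that I don't think your final paragraph resolves. You propose to implicitly solve $p_n(x',x_n)=y$ for $x_n$, obtain a definable $\phi(x',y)$, and apply the inductive hypothesis to $Q(x',y)=(p_1(x',\phi),\ldots,p_{n-1}(x',\phi))$. As you yourself note, $Q$ is no longer polynomial, and your fix is to strengthen the proposition at the outset to cover ``definable maps obtained from polynomials by iterated implicit solutions.'' The trouble is that the polynomial assumption is not cosmetic: Lemma~\ref{Lem_58dpmin} (which imports Lemma 5.5 and Corollary 5.8 of \cite{mms}) is a statement about polynomial maps over an ordered field and its real closure, and it is the source of the open-image and continuous-inverse facts your plan also needs. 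There is no analogue established in this paper for the class of implicitly-defined definable functions, and a dp-minimal ordered field is not o-minimal, so one cannot simply invoke a general definable implicit function theorem. Similarly, your step ``applying Lemma~\ref{Lem_LocalMonotone} together with parametric bounds supplied by Theorem~\ref{Thm_GeneralizeDPMinSets} produces \ldots\ a continuous definable function $\phi$ with $p_n(x',\phi(x',y))=y$'' asserts a parametric implicit function theorem that these lemmas do not actually deliver; they give local monotonicity in one variable and uniform lower bounds on definable positive functions, but not existence, uniqueness, and joint continuity of a solving function.

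The paper sidesteps the whole issue by \emph{not} solving for $x_n$. Instead it freezes $x_n = a_n$ and sets $p^*(y) = \pi(p(y,a_n))$, which is still a polynomial map $F^{n-1}\to F^{n-1}$, so the inductive hypothesis and Lemma~\ref{Lem_58dpmin} apply verbatim. After obtaining an open image $V_0 = p^*(U_0)$ with $p^*|_{U_0}$ a homeomorphism, it analyzes what happens when the last coordinate is allowed to vary: it partitions $U_0$ into $U_1,U_2,U_3$ according to whether $p(U_0\times I_n)$ contains a vertical interval just below, just above, or on neither side of $p_n(y,a_n)$ over $p^*(y)$, applies $(1)_{n-1}$ of Theorem~\ref{Thm_GeneralizeDPMinSets} to pass to a piece with interior, and then uses Corollary~\ref{Cor_BetweenTwoFunctions} (in the $U_1,U_2$ cases) or a contradiction via continuity of $p^{-1}$ (in the $U_3$ case). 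This keeps every application of the inductive hypothesis and of Lemma~\ref{Lem_58dpmin} in the polynomial category. To make your version work you would essentially need to prove, for the dp-minimal setting, a definable implicit/inverse function theorem with continuous inverse for the enlarged function class, which is more than the cited lemmas provide and is itself a nontrivial undertaking. I'd recommend switching to the freeze-the-last-coordinate decomposition rather than the implicit-function decomposition.

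Two smaller points: your base case and the affine normalization are fine, but the normalization is unnecessary once you adopt the paper's decomposition, since all that is needed is a nonvanishing $(n-1)\times(n-1)$ minor, obtained by permuting variables.
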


In order to prove this proposition, we use Lemma 5.5 and Corollary 5.8 of \cite{mms}.  These are true of any ordered field $F$.  We summarize these two in the following lemma.

\begin{lem}\label{Lem_58dpmin}
 Fix $F$ any ordered field and let $R$ be its real closure.  Let $p \in F[x_1, ..., x_n]^n$ and let $B \subseteq R^n$ be any open box whose endpoints lie in $F$.  Suppose that, for some $a \in (B \cap F^n)$, $J_p(a) \neq 0$.  Then, there is an open box $U \subseteq B$ whose endpoint lie in $F$ with $a \in U$ such that $p |_U$ is injective, $V := p(U)$ is open, and $p^{-1} |_{V \cap F^n}$ is continuous.
\end{lem}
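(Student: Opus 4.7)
The plan is to reduce to the inverse function theorem over the real closed field $R$ and then descend to a sub-box with $F$-endpoints using density of $F$ in $R$. Throughout, I think of $p : R^n \to R^n$ as a polynomial map with coefficients in $F$.

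First, I would observe that $J_p \in F[x_1,\ldots,x_n]$ is itself a polynomial, hence continuous as a map $R^n \to R$ in the order topology, so $\{x \in R^n : J_p(x) \neq 0\}$ is open in $R^n$. In particular, I can shrink $B$ to an open box $B_0 \subseteq B$ with $F$-endpoints, still containing $a$, on which $J_p$ is nowhere zero.

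Next, I would invoke a version of the inverse function theorem valid over any real closed field. Since $R$ is real closed, polynomials satisfy the intermediate value theorem and admit Taylor expansion to any order, so the standard local-inverse construction goes through: one obtains an open box $U_0 \subseteq B_0$ containing $a$ such that $p|_{U_0}$ is injective, $V_0 := p(U_0)$ is open in $R^n$, and the inverse $q : V_0 \to U_0$ is continuous. This step uses only the real-closed structure of $R$ (it is essentially Lemma 5.5 of \cite{mms}); the field $F$ plays no role yet.

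Finally, I would use the classical fact that any ordered field is dense in its real closure: if $\alpha \in R \setminus F$, then $\alpha$ is algebraic over $F$, and applying the IVT in $R$ to its minimal polynomial allows one to pinch $\alpha$ between $F$-elements at any prescribed precision. This density lets me choose an open box $U \subseteq U_0$ with endpoints in $F$ containing $a$. Setting $V := p(U)$, the set $V$ is open in $R^n$ as the image of an open box under the local homeomorphism $p|_{U_0}$, and $p^{-1}|_V$ is continuous as a restriction of $q$; continuity of the further restriction to $V \cap F^n$ is then automatic.

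The main obstacle is establishing the inverse function theorem in $R$ without appeal to completeness or any Banach fixed-point argument. Real-closedness is essential: one must build the local inverse using only polynomial IVT together with quantitative Taylor estimates to control $\|p(x) - p(x')\|$ from below in terms of $\|x - x'\|$ on a small enough box. This algebraic analogue of the implicit function theorem is the one nontrivial piece of real analysis here, and it is handled in the generality of arbitrary real closed fields by \cite{mms}.
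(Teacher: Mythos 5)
The overall architecture of your argument --- inverse function theorem over $R$, then descent to an $F$-box --- is the right idea, and it matches the structure of what the paper itself relies on (it does not give a proof of this lemma; it only summarizes Lemma 5.5 and Corollary 5.8 of \cite{mms}). However, your step~3 rests on a false premise: it is \emph{not} true that every ordered field is dense in its real closure. For instance, take $F = \mathbb{Q}(t)$ ordered so that $t$ exceeds every natural number. Then $\sqrt{t}$ lies in the real closure $R$, but every nonzero $b \in F$ has order of magnitude $t^k$ for some integer $k$, so $|\sqrt{t}-b|$ is always infinite (of order at least $\sqrt{t}$); no element of $F$ approximates $\sqrt{t}$ even to within distance $1$. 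Your proposed justification --- applying the IVT to the minimal polynomial of $\alpha$ --- shows only that $F$-elements exist on both sides of $\alpha$, not that they can be taken arbitrarily close.

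Fortunately, the particular consequence you extract from density does not actually require it. What you need is: given $a \in F^n$ and any $R$-open box $U_0 \ni a$, there is an open box $U \subseteq U_0$ containing $a$ whose endpoints lie in $F$. This follows from the much weaker, and true, fact that $F$ is cofinal in $R$: every $\alpha \in R$ is a root of a monic polynomial over $F$ and hence bounded in modulus by an element of $F$. So given $\epsilon \in R$ with $\epsilon > 0$, pick $M \in F$ with $M > 1/\epsilon$; then $1/M \in F$, $0 < 1/M < \epsilon$, and $(a_i - 1/M,\ a_i + 1/M)$ is an $F$-endpoint interval around $a_i$ contained in $(a_i - \epsilon,\ a_i + \epsilon)$. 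The same repair is needed in your step~1, where you shrink $B$ to $B_0$. With ``density of $F$ in $R$'' replaced by ``cofinality of $F$ in $R$'' throughout, your argument is correct.
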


\begin{proof}[Proof of Proposition \ref{Prop_59dpmin}]
 By induction on $n$.  For $n=1$, this follows from Lemma \ref{Lem_PierresLemma} (Theorem 3.6 of \cite{simon}).

 Let $p \in F[x]^n$ and $a = \langle a_1, ..., a_n \rangle \in F^n$ with $J_p(a) \neq 0$ and fix $U = I_1 \times ... \times I_n \subseteq F^n$ an open box containing $a$.  Since $J_p(a) \neq 0$, there exists some minor of the matrix $( \partial p_i / \partial x_j )_{i,j}$ has non-zero determinant.  By swapping variables and functions, we may assume that $J_{p^*}(\pi(a)) \neq 0$, where $\pi : F^n \rightarrow F^{n-1}$ is the projection onto the first $n-1$ coordinates and $p^*(y) = \pi(p(y, a_n))$.  By Lemma \ref{Lem_58dpmin}, there exists an open box $W \subseteq R^{n-1}$ with endpoints in $F$ containing $\pi(a)$ with $W \cap F^{n-1} \subseteq \pi(U)$ satisfying $p^* |_W$ is injective, $p^*(W)$ is open, and $(p^*)^{-1} |_{p^*(W) \cap F^{n-1}}$ is continuous.  The induction hypothesis says that $p^*(W \cap F^{n-1})$ has non-empty interior.  Hence, there exists $U_0 \subseteq (W \cap F^{n-1})$ an open box in the sense of $F$ so that $V_0 := p^*(U_0)$ is open (in $F^{n-1}$).  Then, $p^* |_{U_0}$ is a homeomorphism between $U_0$ and $V_0$.
 
 Notice that $U_0 \times I_n \subseteq U$ is an open box in the sense of $F$.  Define
 \begin{align*}
  U_1 = & \left\{ y \in U_0 : (\exists z \in F)\left( \{ p^*(y) \} \times (z, p_n(y, a_n)) \subseteq p(U_0 \times I_n) \right) \right\}. \\
  U_2 = & \left\{ y \in U_0 : (\exists z \in F) \left( \{ p^*(y) \} \times (p_n(y, a_n), z) \subseteq p(U_0 \times I_n) \right) \right\}. \\
  U_3 = & \ U_0 \setminus (U_1 \cup U_2).
 \end{align*}
 By Theorem \ref{Thm_GeneralizeDPMinSets} (1)${}_{n-1}$, we may assume that $U_0 = U_i$ for $i = 1$, $2$, or $3$.
 
 If $U_0 = U_1$, set $g(y)$ to the infimum of all $z$ witnessing the condition of $U_1$.  Using the functions $p_n(y, a_n)$ and $g(y)$ in Corollary \ref{Cor_BetweenTwoFunctions}, we get an open box $W \subseteq U_1$.  This is the desired conclusion.  A similar argument shows that, if $U_0 = U_2$, then $U_0$ has non-empty interior.
 
 So suppose $U_0 = U_3$.  Consider the definable set $Y = p_n(U_0 \times I_n)$.  Clearly $p_n(b, a_n) \in Y$ for all $b \in U_0$.  However, since $U_0 = U_3$, there are elements $z$ arbitrarily close to $p_n(b, a_n)$ for which $z \notin Y$.  By Lemma \ref{Lem_PierresLemma}, there is an open interval $I$ around $p_n(b, a_n)$ such that $I \cap Y = \{ p_n(b, a_n) \}$.  Define $h_1, h_2 : V_0 \rightarrow \overline{F}$ to be such that $(h_1(p^*(b)), h_2(p^*(b)))$ is the largest convex set witnessing this.  By Corollary \ref{Cor_BetweenTwoFunctions}, there is an open box
 \[
  W \subseteq \{ \langle p^*(y), z \rangle : y \in U_0, h_1(p^*(y)) < z < h_2(p^*(y)) \}.
 \]
 By continuity, $p^{-1}(W) \cap F^n$ is open.  However, $p^{-1}(W) \cap [U_0 \times I_n] \subseteq U_0 \times \{ a_n \}$.  Hence, it is not open.  Contradiction.
\end{proof}

The following lemma is contained in the proof of Proposition 5.4 in \cite{mms} and works for any field.  The proofs of (1), (2), and (3) follow from the proof of Theorem 1 in \cite{mmv}.  See the proof of Proposition 5.4 in \cite{mms} for more details.

\begin{lem}\label{Lem_Prop54Lem}
 Let $F$ a field and $\alpha \notin F$ algebraic over $F$.  Let $\alpha = \alpha_1, ..., \alpha_n$ be the conjugates of $\alpha$ over $F$ and let
 \[
  g(x_1, ..., x_n,y) = \prod_{i=1}^n \left( y - \sum_{j = 0}^{n-1} \alpha^j_i x_j \right) = \sum_{j=0}^{n-1} G_j(x_1, ..., x_n) y^j + y^n.
 \]
 Then,
 \begin{enumerate}
  \item $G_j(x) \in F[x]$ for all $j$.
  \item For $a \in F^n$, if $a_j \neq 0$ for some $j$, then $g(a, y)$ has no roots in $F$.
  \item There is some $d \in F^n$ such that $J_G(d) \neq 0$ and $d_j \neq 0$ for some $j$.
 \end{enumerate}
\end{lem}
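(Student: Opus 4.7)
The plan is to handle the three parts separately, using Galois-theoretic arguments for (1) and (2) and a Jacobian factorization through the elementary symmetric function map for (3). Throughout, I would set $L_i(x) = \sum_j \alpha_i^j x_j$ so that $g(x,y) = \prod_{i=1}^n (y - L_i(x))$, and observe that each $G_j(x)$ is, up to sign, an elementary symmetric polynomial evaluated at $L_1(x), \ldots, L_n(x)$. Part (1) then follows by passing to a Galois closure $K$ of $F(\alpha)/F$: every $\sigma \in \mathrm{Gal}(K/F)$ permutes the conjugates $\alpha_1, \ldots, \alpha_n$, hence permutes the linear forms $L_i(x)$, and since the product $\prod_i (y - L_i(x))$ is unchanged under such permutations, $g(x,y)$ is $\sigma$-fixed; thus its coefficients $G_j$ lie in the fixed field $F$, giving $G_j \in F[x_1, \ldots, x_n]$.

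For part (2), I would observe that the roots of $g(a, y)$ in the algebraic closure of $F$ are precisely the values $L_i(a) \in F(\alpha_i)$. If some $L_i(a)$ lies in $F$, then applying a Galois automorphism sending $\alpha_i \mapsto \alpha$ yields $\sum_j \alpha^j a_j \in F$. Since $\{1, \alpha, \ldots, \alpha^{n-1}\}$ is an $F$-basis of $F(\alpha)$, this forces $a_j = 0$ for every $j$ corresponding to a nontrivial power of $\alpha$, which contradicts the hypothesis that some such $a_j$ is nonzero.

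For part (3), I would factor the polynomial map $G = (G_0, \ldots, G_{n-1}) : F^n \to F^n$ as $G = e \circ L$, where $L : x \mapsto (L_1(x), \ldots, L_n(x))$ is linear with matrix the Vandermonde $V = (\alpha_i^j)_{i,j}$, and $e : (s_1, \ldots, s_n) \mapsto$ (signed coefficients of $\prod_i (y - s_i)$). By the chain rule, $J_G(x) = \det(V) \cdot J_e(L(x))$. Since the conjugates $\alpha_i$ are distinct, $\det(V) = \prod_{i<i'}(\alpha_{i'} - \alpha_i) \neq 0$. The classical Jacobian identity for the elementary symmetric map yields $J_e(s) = \pm \prod_{i<i'}(s_{i'} - s_i)$, and so $J_G(d) \neq 0$ exactly when $L_i(d) \neq L_{i'}(d)$ for all $i \neq i'$. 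This is a nonempty Zariski-open condition on $F^n$; since $F$ is infinite, I can pick $d \in F^n$ satisfying it together with the requirement that at least one coordinate of $d$ be nonzero.

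The main technical point is the Jacobian computation in (3), which rests on the identity $J_e(s) = \pm \prod_{i<i'}(s_{i'} - s_i)$ for the elementary symmetric map; everything else is a routine consequence of Galois theory and linear independence. A small indexing nuisance also appears in (2), where one must separate the \emph{constant} variable from those multiplying nontrivial powers of $\alpha$; once this distinction is made, linear independence of $1, \alpha, \ldots, \alpha^{n-1}$ finishes the argument immediately.
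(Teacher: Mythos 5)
The paper does not actually prove this lemma; it defers to the proof of Proposition 5.4 in \cite{mms}, which in turn rests on Theorem 1 of \cite{mmv}, so there is no in-paper argument to compare against. Your proof is correct and supplies the details the paper leaves to the references. The factorization $G = e \circ L$ through the Vandermonde matrix, combined with the Jacobian identity $J_e(s) = \pm\prod_{i<i'}(s_{i'}-s_i)$ for the elementary symmetric map, is the standard argument and is almost certainly what underlies the cited sources; parts (1) and (2) via Galois fixing and linear independence of $1,\alpha,\ldots,\alpha^{n-1}$ are likewise the canonical route.

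Two remarks. First, you are right that the indexing in the statement is inconsistent (the tuple is written $(x_1,\ldots,x_n)$ but the sum runs over $j=0,\ldots,n-1$), and item (2) as literally written is false: if only the constant-index coordinate of $a$ is nonzero, then $g(a,y) = (y-a_0)^n$ has the root $a_0 \in F$. The correct hypothesis is that some coordinate multiplying a nontrivial power of $\alpha$ is nonzero, which is precisely what your linear-independence step uses, so you handled this correctly. Second, in item (3) the extra condition "$d_j \neq 0$ for some $j$" is automatically implied by $J_G(d)\neq 0$: by the same Galois argument as in (2), if $L_1(d)\in F$ then every conjugate $L_i(d)$ equals $L_1(d)$ and the Jacobian vanishes. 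So your final step of intersecting with a further Zariski-open condition is harmless but not needed. Finally, note that your construction of $d$ uses that $F$ is infinite and that the conjugates are distinct (separability); both hold in every application in the paper (characteristic zero), even though the lemma is phrased for an arbitrary field.
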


\begin{proof}[Proof of Proposition \ref{Prop_54dpmin}]
 Fix $F$ a dp-minimal ordered field and let $R$ be its real closure.  Suppose, by means of contradiction, that there exists $\alpha \in (R \setminus F)$ arbitrarily close to $F$.  Construct $G = \langle G_1, ..., G_n \rangle$ as in Lemma \ref{Lem_Prop54Lem} above for this choice of $\alpha$, so conditions (1) through (3) hold (say (3) holds for some $d \in F^n$).  By Proposition \ref{Prop_59dpmin}, there exists an open $U \subseteq F^n$ with $d \in U$ so that $V := G(U)$ has non-empty interior.  By choosing $U$ sufficiently small, we may assume that, for all $c \in U$, $J_G(c) \neq 0$ and $c_j \neq 0$ for some $j$.  By Proposition \ref{Lem_58dpmin}, we may assume that $G |_U$ is a homeomorphism from $U$ to $V$.  Take $B \subseteq V$ an open box and, without loss of generality, suppose $e := G(d) \in B$.  Let $f : (R \setminus \{ 0 \}) \rightarrow R$ be the function
 \[
  f(y) = - y^n / y^{n-1} - \sum_{i=0}^{n-2} e_i y^i / y^{n-1}
 \]
 and let $h(y) = \sum_{i=0}^{n-1} d_i y^i$.  Note that $h(\alpha) \neq 0$ since $h(\alpha)$ is a root of $g(d,y)$, which has no roots in $F$ by Lemma \ref{Lem_Prop54Lem} (2).  It is not hard to show that $f(h(\alpha)) = e_{n-1}$ (see the proof of Proposition 5.4 in \cite{mms} for more details).  Hence, as $b \rightarrow \alpha$, $f(h(b)) \rightarrow e_{n-1}$.  Therefore, there is $b \in F$ so that $h(b) \neq 0$ and
 \[
  \langle e_0, e_1, ..., e_{n-2}, f(h(b)) \rangle \in B \subseteq V.
 \]
 Since $G |_U$ is a homeomorphism, there exists $c \in U$ so that $G(c) = \langle e_0, e_1, ..., e_{n-2}, f(h(b)) \rangle$.  So $c_j \neq 0$ for some $j$, hence by Lemma \ref{Lem_Prop54Lem} (2), $g(c,y)$ has no roots in $F$.  However, clearly $h(b)$ is a root of $g(c,y)$ in $F$, a contradiction.
\end{proof}

Here is where we must part ways with dp-minimality and impose the stronger condition of dp-smallness.  The main obstruction of using dp-minimality here is that dp-minimal ordered groups need not be divisible (for example, $(\mathbb{Z}, +, <)$).  For the remainder of this section, suppose that $\mathfrak{F} = (F; +, \cdot, <)$ is a dp-small ordered field.  Put on $F$ the archimedean valuation $v : F \rightarrow \Gamma$, where $v(a) \ge 0$ for $a \in F$ if and only if there exists $n \in \mathbb{N}$ such that $|a| < n$.  Let $\overline{F}$ be the residue field, $V$ the valuation ring (i.e., the convex hull of the prime field), and $M$ the maximal ideal in $V$ (i.e., the set of infinitesimals near zero).  So $\overline{F} = V / M$ is archimedean.  We now follow the remainder of the proof in \cite{mms}.

\begin{lem}\label{Lem_510co}
 The value group $\Gamma$ is divisible.
\end{lem}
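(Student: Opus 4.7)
The plan is to reduce this to Theorem \ref{thm_MainResults2} (2) rather than attempting to use Theorem \ref{thm_MainResults2} (4) directly. The latter would be awkward because the archimedean valuation $v$ on $\mathfrak{F}$ is not in general definable in the pure ordered field language (the definition of $V$ quantifies externally over $\mathbb{N}$), so $\Gamma$ need not be simply interpretable in $\mathfrak{F}$ and Lemma \ref{Lem_SIwCO} does not apply. Fortunately the conclusion we need is purely algebraic, and Theorem \ref{thm_MainResults2} (2) already supplies all the arithmetic we require.

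The execution is then immediate. Fix $n \ge 1$ and $\gamma \in \Gamma$. Choose any $a \in F^{\times}$ with $v(a) = \gamma$, and by replacing $a$ with $-a$ if necessary assume $a > 0$. Since $\mathfrak{F}$ is a dp-small ordered field, Theorem \ref{thm_MainResults2} (2) furnishes some $b \in F_+$ with $b^n = a$. Applying $v$ to both sides of $b^n = a$ gives $n \cdot v(b) = v(a) = \gamma$ in $\Gamma$, so $\gamma$ is divisible by $n$ in $\Gamma$. As $n$ and $\gamma$ were arbitrary, $\Gamma$ is divisible.

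There is no real obstacle here, only two small points worth checking. First, one must verify that the $n$th root lands in $F_+$, which is why we arranged $a > 0$: for $n$ odd the positive root is unique, and for $n$ even one simply replaces $b$ by $|b|$. Second, one should note that this argument does not need Henselianity of $v$; it uses only the algebraic extraction of roots in the ordered field itself, which is the whole strength of Theorem \ref{thm_MainResults2} (2).
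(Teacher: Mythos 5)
Your proof is correct and is essentially the paper's own argument: the paper also derives divisibility of $\Gamma$ directly from Theorem \ref{thm_MainResults2}(2), stating that $(F_+;\cdot,<)$ is a divisible ordered group and hence $\Gamma = v(F^{\times})$ is divisible. Your added remarks (why one cannot invoke Theorem \ref{thm_MainResults2}(4) since the archimedean valuation need not be definable, and the sign bookkeeping to stay inside $F_+$) are sound but not part of the paper's terse two-line proof.
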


\begin{proof}
 By Theorem \ref{thm_MainResults2} (2), $(F_+; \cdot, <)$ is divisible.  Hence, $\Gamma$ is also divisible.
\end{proof}

\begin{lem}\label{Lem_511co}
 The residue field $\overline{F}$ is real closed.
\end{lem}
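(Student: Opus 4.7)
The plan is to verify the intermediate value property for polynomials over $\overline{F}$; since $\overline{F}$ is an ordered field of characteristic zero, this is equivalent to $\overline{F}$ being real closed. So I will fix $\bar{p}(x) \in \overline{F}[x]$ and $\bar{a} < \bar{b}$ in $\overline{F}$ with $\bar{p}(\bar{a}) < 0 < \bar{p}(\bar{b})$, and produce $\bar{c} \in \overline{F}$ with $\bar{p}(\bar{c}) = 0$. Lifting the coefficients of $\bar{p}$ to $V$ yields $P(x) \in V[x]$, and lifting $\bar{a},\bar{b}$ gives $a, b \in V$; since $P(a)$ and $P(b)$ reduce to the nonzero elements $\bar{p}(\bar{a})$ and $\bar{p}(\bar{b})$, they are units of $V$ with matching signs, so $P(a) < 0 < P(b)$ already in $F$. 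I then set $S = \{x \in [a,b] \cap F : P(x) > 0\}$ and let $\gamma$ be the infimum of $S$ in the Dedekind completion of the additive group of $F$; continuity of $P$ at $a$ and $b$ places $\gamma$ strictly inside $(a,b)$ with positive $F$-sized gaps on either side.

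The argument then branches on whether $F$ is archimedean. In the archimedean case $V = F$, so $\overline{F} = F$, and the real closure $R$ embeds into $\mathbb{R}$; since $\mathbb{Q} \subseteq F$ is dense in $R$, every $\alpha \in R$ is arbitrarily close to $F$ and hence lies in $F$ by Proposition \ref{Prop_54dpmin}. So $F = R$ is real closed, and $\overline{F} = F$ is real closed.

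Suppose now $F$ is non-archimedean, so $F$ has positive infinitesimals. If $\gamma \in F$, then $\gamma \in V$ by convexity, and continuity of the polynomial $P$ at $\gamma$ rules out either sign for $P(\gamma)$: a positive or negative value produces an open $F$-neighborhood of $\gamma$ lying entirely inside or outside $S$, contradicting $\gamma = \inf S$. Hence $P(\gamma) = 0$ and $\bar{\gamma}$ is the desired root. If $\gamma \notin F$, I choose a positive infinitesimal $\epsilon \in F$ small enough that $(\gamma - \epsilon, \gamma + \epsilon) \subseteq (a,b)$. The characterization of $\gamma$ as the infimum of $S$ produces $c_2 \in S$ with $\gamma < c_2 < \gamma + \epsilon$; and since $\gamma$ is simultaneously the supremum of $\{c \in F : c < \gamma\}$, there is $c_1 \in F$ with $\gamma - \epsilon < c_1 < \gamma$, which then lies outside $S$ so that $P(c_1) \le 0$. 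Both $c_1, c_2 \in V$, and $|c_1 - c_2| < 2\epsilon$ is infinitesimal, so $\bar{c}_1 = \bar{c}_2$ in $\overline{F}$; the inequalities $\overline{P(c_1)} \le 0 \le \overline{P(c_2)}$ combined with this equality force $\bar{p}(\bar{c}_1) = 0$.

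The main obstacle is structural: the infinitesimal-squeeze used in the non-archimedean subcase has no analogue when $F$ is archimedean, because $F$ then has no nonzero positive infinitesimals to squeeze with. That is exactly why the archimedean case must be dispatched by a completely different route, leveraging Proposition \ref{Prop_54dpmin} and the density of $\mathbb{Q}$ in $\mathbb{R}$ to show $F$ itself is already real closed and thereby identify $\overline{F}$ with $F$.
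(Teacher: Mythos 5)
Your archimedean branch is correct, and it is in fact essentially the endgame of the paper's own argument (applying Proposition \ref{Prop_54dpmin} to an archimedean dp-minimal ordered field). The non-archimedean branch has a genuine gap, and the warning sign is that this branch never invokes dp-smallness at all: if it were correct as written, it would prove that the residue field of \emph{every} non-archimedean ordered field satisfies the intermediate value property for polynomials, which is false. For instance, $F = \mathbb{Q}(t)$ with $t$ a positive infinitesimal is non-archimedean with $\overline{F} = \mathbb{Q}$, which is certainly not real closed.

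The step that fails is the production of $c_1 \in F$ with $\gamma - \epsilon < c_1 < \gamma$. The Dedekind completion $\overline{F}$ (in the sense of the ordering) is not a group when $F$ is non-archimedean, and for $\gamma \in \overline{F} \setminus F$ and $\epsilon \in F_+$ the shifted cut $\gamma - \epsilon$ can coincide with $\gamma$; this happens exactly when the cut defined by $\gamma$ is invariant under translation by $\epsilon$, i.e., when $\{c \in F : c < \gamma\}$ is bounded away from $\gamma$ by more than $\epsilon$. You do know that $S$ approaches $\gamma$ from above by construction, but nothing forces elements of $F$ to approach $\gamma$ from below within the infinitesimal $\epsilon$, and the equality $\gamma = \sup\{c \in F : c < \gamma\}$ in the order completion does not supply them. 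Concretely, in $F = \mathbb{Q}(t)$ with $\bar{p}(x) = x^2 - 2$ lifted to $P(x) = x^2 - 2$, the cut $\gamma$ sits at $\sqrt{2}$, and every $c \in \mathbb{Q}(t)$ with $c < \sqrt{2}$ has a positive rational gap to $\sqrt{2}$; with $\epsilon = t$ there is no admissible $c_1$. This is precisely where the dp-smallness hypothesis must do the work. The paper's proof uses the sign-changing rootless $p$ to definably recover the maximal ideal as $M = \{a \in F : (\forall b)(|p(b)| > |a|)\}$, so that the valuation and therefore $\overline{F}$ become interpretable in $\mathfrak{F}$; Lemma \ref{Lem_SIwCO} then transfers dp-smallness to $\overline{F}$, and Proposition \ref{Prop_54dpmin} applied to the archimedean $\overline{F}$ yields the contradiction. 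Your squeeze argument needs to be replaced by (or supplemented with) something that actually exploits the dp-smallness of $F$.
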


\begin{proof}
 Suppose $\overline{F}$ is not real closed and let $p(x) \in V[x]$ be such that $\overline{p}(x)$ changes sign in $\overline{F}$ but has no root in $\overline{F}$.  Then, define
 \[
  M^* = \{ a \in F : (\forall b \in F)( |p(b)| > |a| ) \}.
 \]
 It is not hard to show that $M^* = M$, the set of infinitesimals of $F$ (for more details, see the proof of Proposition 5.11 of \cite{mms}).  Therefore, $v$ is definable.  By Lemma \ref{Lem_SIwCO}, $(\overline{F}; +, \cdot, 0, 1)$ is dp-small (and, in particular, dp-minimal).  Therefore, since $\overline{F}$ is archimedean, by Proposition \ref{Prop_54dpmin}, $\overline{F}$ is real closed.
\end{proof}

\begin{lem}\label{Lem_512co}
 The valued field $(F, v, \Gamma)$ is Henselian.
\end{lem}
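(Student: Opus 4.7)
My plan is to prove that $F$ is henselian by showing that its henselization $F^h$ inside the real closure $R$ coincides with $F$, with the contradiction coming from Proposition \ref{Prop_54dpmin} applied to any putative element of $F^h \setminus F$.

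Two standard facts from valuation theory are needed at the outset: the henselization embeds into $R$ as an immediate algebraic extension of $F$ (so $\Gamma_{F^h} = \Gamma$ and $\overline{F^h} = \overline{F}$), and $F$ is dense in $F^h$ in the valuation topology, i.e., for each $\alpha \in F^h$ and each $\gamma \in \Gamma$ there is $b \in F$ with $v(b - \alpha) > \gamma$. Using Lemma \ref{Lem_510co} (divisibility of $\Gamma$) I also get $\Gamma_R = \Gamma$: since $R/F$ is algebraic, $\Gamma_R/\Gamma$ is torsion, and if $n\gamma' \in \Gamma$ with $\gamma' \in \Gamma_R$, divisibility gives $\gamma \in \Gamma$ with $n\gamma = n\gamma'$, so $\gamma' - \gamma$ is $n$-torsion in the torsion-free group $\Gamma_R$, hence $\gamma' = \gamma \in \Gamma$.

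Now suppose for contradiction $F \subsetneq F^h$, and pick $\alpha \in F^h \setminus F$. For any positive $\epsilon \in R$ we have $v(\epsilon) \in \Gamma_R = \Gamma$, so valuation-density of $F$ in $F^h$ produces some $b \in F$ with $v(b - \alpha) > v(\epsilon)$; this forces $|b - \alpha|$ to be infinitesimal relative to $|\epsilon|$, in particular $|b - \alpha| < \epsilon$. Hence $\alpha$ is approximable from $F$ in the sense required by Proposition \ref{Prop_54dpmin} (which applies since dp-smallness implies dp-minimality by Proposition \ref{Prop_wCOdpMin}), so $\alpha \in F$, contradicting $\alpha \notin F$. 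Therefore $F^h = F$ and $F$ is henselian.

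The main thing requiring care is setting up $F^h$ inside $R$ in the ordered context: one must check that the usual henselization of a valued field sits inside $R$ compatibly with the order coming from $R$, so that the valuation-density property is genuinely available there. Once this routine point is dispatched, the remainder of the proof is a direct combination of Lemma \ref{Lem_510co} with Proposition \ref{Prop_54dpmin}, and notably makes no further use of the specific structure of $p$ or of any Newton-style estimate.
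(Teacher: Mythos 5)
Your overall strategy --- realize the Henselization $F^h$ inside $R$ as an immediate extension and then use Proposition \ref{Prop_54dpmin} to force $F^h = F$ --- is close in spirit to what the paper does, but the ``routine point'' you flag (compatibility of $F^h$ with the order from $R$) is not where the difficulty lies. The genuine gap is the valuation-density claim. It is not a standard fact that $F$ is dense in $F^h$ in the valuation topology, and it is false in general. Immediacy of $F^h/F$ only gives you that, for $\alpha \in F^h \setminus F$, the set $\{ v(b - \alpha) : b \in F \}$ has no maximum; that is strictly weaker than being cofinal in $\Gamma$. When $\Gamma$ has rank greater than one, the obstruction to Henselianity can live in a coarsening: if $v$ refines a Henselian valuation $w$ and the induced valuation on the residue field of $w$ is not Henselian, then for an $\alpha$ witnessing that failure the values $v(b-\alpha)$ with $b \in F$ stay bounded above by the convex subgroup corresponding to $w$, so $F$ is not valuation-dense near $\alpha$. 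The archimedean valuation on a dp-small ordered field can certainly have value group of rank greater than one, so this is a real obstruction here.

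The paper proves only what is actually available: using the Newton step $c = b - p(b)/p'(b)$, it shows $v(c - \alpha) \ge 2\,v(b - \alpha)$, whence $S = \{ v(b - \alpha) : b \in F,\ v(b - \alpha) > 0 \}$ is cofinal in the convex subgroup $S^*$ it generates, but not in all of $\Gamma_{>0}$. It then coarsens to the definable valuation $v_0 : F \rightarrow \Gamma/S^*$ and runs the Proposition \ref{Prop_54dpmin} argument in the residue field of $v_0$, which is precisely where the needed approximation property does hold. Your argument goes through verbatim only when $\Gamma$ has rank one (so that $F^h$ sits inside the topological completion of $F$, which is Henselian by Ostrowski's theorem); in general you need the coarsening step, and with it the Newton estimate and the definability of $I$ that your proposal says it avoids.
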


\begin{proof}
 Suppose not.  Then there exists a polynomial
 \[
  p(x) = x^n + ax^{n-1} + \sum_{i=0}^{n-2} c_i x^i \in V[x]
 \]
 with $v(a) = 0$ and $v(c_i) > 0$ for $i < n-1$ that has no root in $F$.  Moreover, there is $\alpha \in R$ with $p(\alpha) = 0$, $v(a - \alpha) > 0$, and $v(p'(\alpha)) = 0$ (for details, see Theorem 4 of \cite{riben}).  Let
 \[
  S := \{ v(b - \alpha) : b \in F, v(b - \alpha) > 0 \},
 \]
 let $S^*$ be the convex subgroup of $\Gamma$ generated by $S$, and let $I := \{ b \in F : v(b) > S^* \}$.
 
 First, we show that $S$ is cofinal in $S^*$.  To see this, take $v(b-\alpha) \in S$ and let $c = b - p(b)/p'(b)$ ($v(b-\alpha) > 0$, so $v(p'(b)) = v(p'(\alpha)) = 0$, so $p'(b) \neq 0$).  Then, it is easy to check that $v(c-\alpha) \ge 2 v(b-\alpha)$ using Taylor's Theorem.  This shows that $S$ is cofinal in $S^*$.  Moreover, $I$ is definable in $\mathfrak{F}$.  To see this, notice that $v(a - \alpha) > 0$, hence $v(p(a)) = v(a - \alpha)$ by Taylor's Theorem (since $v(p'(\alpha)) = 0$).  By Lemma \ref{Lem_510co}, $\Gamma$ is divisible, hence there exists $d \in F$ so that $v(d) = v(p(a)) / 2 = v(a - \alpha) / 2$.  If $\alpha > a$, set $c = a + d$ and otherwise set $c = a - d$.  Therefore, $\alpha$ lies between $a$ and $c$ and, for all $b \in F$ between $a$ and $c$, $v(b - \alpha) > 0$.  Set $J$ to be the interval in $F$ between $a$ and $c$.  Hence, $I = \{ d \in F : (\forall b \in J)(|p(b)| > |d|) \}$.

 The valuation $v_0 : F \rightarrow \Gamma / S^*$ is definable, so look at the residue field $F_0$.  As in the proof of Lemma \ref{Lem_511co}, $\overline{\alpha} \in F_0$ (the image of $\alpha$ in $R_0$, the residue field of $R$ with respect to $v_0$).  Moreover, $\overline{\alpha}$ is in the convex hull of $\overline{J}$ in $R_0$, hence $\overline{\alpha} \in \overline{J}$.  However, for all $b \in J$, $v(p(b)) = v(b - \alpha) \in S^*$.  Therefore, $\overline{p}$ has no root in $\overline{J}$.  We see that $\overline{\alpha}$ directly contradicts this fact.
\end{proof}

\begin{proof}[Proof of Theorem \ref{Thm_CoRC}]
 Notice that (4) $\Rightarrow$ (1) $\Rightarrow$ (2) $\Rightarrow$ (3) is trivial.  So we need only show (3) $\Rightarrow$ (4).  Suppose $\mathfrak{F} = (F; +, \cdot, <)$ is a dp-small ordered field.  By Lemma \ref{Lem_510co}, Lemma \ref{Lem_511co}, and Lemma \ref{Lem_512co}, $(F, v, \Gamma)$ is a Henselian valued field with a divisible value group and a real closed residue field.  This implies that $F$ itself is real closed.
\end{proof}

\section{VC-Minimal Fields}\label{Sec_VCMinF}

In this section we move away from ordered fields and discuss VC-minimal fields in general.  We exhibit what is known about stable VC-minimal fields and state a conjecture about the nature of VC-minimal fields in general.

\begin{rem}\label{Rem_ACFVCMin}
 The theory of algebraically closed fields is VC-minimal.  In particular, it is the reduct of ACVF, which is certainly VC-minimal.
\end{rem}

In fact, piecing together results from \cite{kp} and \cite{ou}, we get a much stronger result.

\begin{thm}\label{Thm_dpStable}
 Let $\mathfrak{F} = (F; +, \cdot)$ be a field, let $T = \Th(\mathfrak{F})$, and suppose $T$ is stable and dp-minimal.  Then, $\mathfrak{F}$ is algebraically closed.
\end{thm}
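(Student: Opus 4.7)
The plan is very short: piece together one structural fact about stable dp-minimal theories with the classical classification of superstable fields. The argument essentially amounts to citing the two references the author mentions.

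First, I would appeal to the work of Onshuus and Usvyatsov in \cite{ou}, which analyzes the relationship between dp-rank and weight in dependent theories and, in particular, shows that in a stable theory these two ranks coincide. Consequently, the assumption that $T$ is both stable and dp-minimal forces every type (over a small set) to have weight at most one. In the stable context this is enough to conclude that $T$ is superstable; indeed, the home sort will have $U$-rank at most one.

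Second, I would invoke the classical Cherlin--Shelah theorem, available via \cite{kp}, that every infinite superstable field is algebraically closed. Applied to $\mathfrak{F}$, whose theory is superstable by the preceding paragraph, this immediately yields that $\mathfrak{F}$ is algebraically closed.

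The only potential obstacle is making sure that the inference ``stable $+$ dp-minimal $\Rightarrow$ superstable'' from \cite{ou} applies uniformly to the entire one-sorted theory of the field $(F;+,\cdot)$ rather than only to the home sort in isolation. Since the language has a single sort and all definable sets live in finite Cartesian powers of the home sort, finite weight on the home sort propagates to the full theory, and no additional argument beyond citing the stated results is needed.
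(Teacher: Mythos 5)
There is a genuine gap in the middle step. Your reduction from ``stable and dp-minimal'' to ``weight one'' is exactly what the paper's proof does (Theorem 3.5(iii) of \cite{ou}). But the next inference---``weight one in a stable theory gives superstability, indeed $U$-rank at most one''---is false in general, and this is precisely the point that the paper's citation of \cite{kp} is designed to avoid. Weight measures the number of mutually independent directions in which a realization can fork, while superstability is about forking chains terminating; there is no implication from bounded weight to superstability. A concrete counterexample is the abelian group $\mathbb{Z}_p^{(\omega)}$ (the direct sum of countably many copies of the $p$-adic integers): its p.p.-definable subgroups form a chain, so by Proposition \ref{prop_dpMinAG} it is dp-minimal, and it is stable, but it is not superstable. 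Even restricting to fields, one cannot assume superstability here without circularity, since the only way to know a stable dp-minimal field is superstable is to already know it is algebraically closed.

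The paper's proof therefore does \emph{not} pass through the Cherlin--Shelah theorem on superstable fields. Instead it invokes Corollary 2.4 of \cite{kp}, which is a genuine strengthening: Krupi\'nski and Pillay show directly that a stable, strongly dependent field of weight one is algebraically closed, without any superstability hypothesis. Describing their result as ``the classical Cherlin--Shelah theorem, available via \cite{kp}'' misreads the citation; the entire content of \cite{kp} is that one can dispense with superstability and work with the weaker weight hypothesis that actually comes out of \cite{ou}. If you want to repair your argument, replace the paragraph about superstability and $U$-rank with a direct appeal to Corollary 2.4 of \cite{kp}.
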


\begin{proof}
 By Theorem 3.5 (iii) of \cite{ou}, any theory $T$ that is stable and dp-minimal has weight $1$.  By Corollary 2.4 of \cite{kp}, stable and strongly dependent (in particular, dp-minimal) fields with weight $1$ are algebraically closed.
\end{proof}

In particular, the theory of separably closed fields with positive Er\v{s}ov-invariant is not dp-minimal, or even strongly dependent.  This is not surprising, since this theory is not even superstable \cite{Wood}.

We get, as an immediate corollary, the following fact about Henselian valued fields.

\begin{cor}\label{Cor_ACFVClassify}
 Suppose that $\mathfrak{F} = (F; +, \cdot, |)$ is a dp-small Henselian valued field (with non-trivial valuation) with a stable residue field.  Then $\mathfrak{F}$ is an algebraically closed valued field.
\end{cor}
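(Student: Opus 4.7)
The argument should be a direct analogue, for the algebraically closed case, of the way Corollary \ref{Cor_CoVF} was obtained for the real closed case, and no essentially new ideas are needed beyond assembling results already in the paper. The plan is to extract divisibility of the value group and algebraic closedness of the residue field separately, and then invoke a classical Henselian transfer theorem.

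First, I would apply Theorem \ref{thm_MainResults2} (4) directly to $\mathfrak{F}$: the hypotheses give immediately that the value group $\Gamma$ is divisible. Next, I would handle the residue field. As noted in the proof of Theorem \ref{thm_MainResults2} (4), the residue field $\overline{F}$ is simply interpretable over $\emptyset$ in $\mathfrak{F}$: since $|$ is in the language and the valuation is non-trivial, the valuation ring $V$ and its maximal ideal $M$ are both $\emptyset$-definable, and $\overline{F} = V/M$ with its induced ring structure is a simple interpretation. Lemma \ref{Lem_SIwCO} then gives that $\Th(\overline{F}; +, \cdot)$ is dp-small, and Proposition \ref{Prop_wCOdpMin} promotes this to dp-minimality. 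Combining dp-minimality with the hypothesis that the residue field is stable, Theorem \ref{Thm_dpStable} applies to yield that $\overline{F}$ is algebraically closed.

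At this point $(F, v, \Gamma)$ is a Henselian valued field with divisible value group and algebraically closed residue field, and a classical theorem (Krull, or the relevant special case of Ax-Kochen--Ershov transfer) gives that any such field is itself algebraically closed. This final assembly mirrors the last line of the proof of Theorem \ref{Thm_CoRC} exactly, with \emph{algebraically closed} replacing \emph{real closed}. The only substantive ingredient beyond the paper's own machinery is Theorem \ref{Thm_dpStable}; everything else is routine plumbing, and there is no real obstacle, since the simple interpretability of $\overline{F}$ in $\mathfrak{F}$ is already handled in the proof of Theorem \ref{thm_MainResults2} (4).
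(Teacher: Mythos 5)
Your reconstruction is exactly what the paper has in mind: the paper offers no explicit proof, calling the corollary ``immediate,'' and your assembly (value group divisible by Theorem~\ref{thm_MainResults2}~(4); residue field dp-small via Lemma~\ref{Lem_SIwCO}, hence dp-minimal by Proposition~\ref{Prop_wCOdpMin}; residue field algebraically closed by Theorem~\ref{Thm_dpStable}; then a Henselian lifting step) mirrors precisely how Corollary~\ref{Cor_CoVF} is assembled in the real-closed case.

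That said, the final lifting step deserves more scrutiny than either you or the paper gives it. The statement ``a Henselian valued field with divisible value group and algebraically closed residue field is algebraically closed'' is \emph{not} a theorem in full generality. It is correct in residue characteristic $0$, because there a Henselian valuation is defectless, so any finite extension $L/F$ satisfies $[L:F] = e \cdot f = 1$. But in residue characteristic $p>0$, Henselian valued fields can admit proper finite \emph{immediate} (defect) extensions, and the naive transfer fails without further hypotheses. The analogous step in the proof of Theorem~\ref{Thm_CoRC}/Corollary~\ref{Cor_CoVF} is safe precisely because an ordered field has characteristic~$0$; here there is no such automatic restriction, since Theorem~\ref{Thm_dpStable} happily produces algebraically closed residue fields of any characteristic. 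So to make the last sentence of your proof correct, you must either restrict to residue characteristic~$0$, or supply an argument (beyond anything cited in the paper) that dp-smallness forces the Henselian valuation to be defectless. Your plan is the right one and matches the paper's, but the phrase ``a classical theorem gives'' conceals a genuine gap in positive residue characteristic -- one the paper itself leaves unaddressed by labeling the corollary ``immediate.''
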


We have, in particular, that all stable VC-minimal fields are algebraically closed.  On the other hand, by Theorem \ref{Thm_CoRC}, all VC-minimal ordered fields are real closed.  We also know that all VC-minimal unstable theories interpret an infinite linear order (Theorem 3.5 of \cite{gl}).  This gives evidence for the following conjecture.

\begin{conj}[VC-minimal fields conjecture]\label{Conj_VCFields}
 Let $\mathfrak{F} = (F; +, \cdot)$ be a field and let $T = \Th(\mathfrak{F})$.  Then $T$ is VC-minimal if and only if $\mathfrak{F}$ is real closed or algebraically closed.
\end{conj}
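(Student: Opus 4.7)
The backward direction is essentially already in hand: ACF is VC-minimal by Remark \ref{Rem_ACFVCMin}, and RCF is o-minimal and thus VC-minimal. So I focus on the forward direction, and my plan is to split the argument according to whether $T$ is stable.

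If $T$ is stable, the conjecture reduces at once: VC-minimality implies convex orderability, hence dp-smallness, hence dp-minimality by Proposition \ref{Prop_wCOdpMin}, so Theorem \ref{Thm_dpStable} gives that $\mathfrak{F}$ is algebraically closed. The hard case is $T$ unstable, where the target is real closed. The natural strategy is to produce a \emph{definable} linear order on $F$ compatible with $+$ and $\cdot$, at which point Theorem \ref{Thm_CoRC} applied to the resulting ordered field theory would force $\mathfrak{F}$ to be real closed. Theorem 3.5 of \cite{gl} supplies an infinite interpretable linear order in the unstable VC-minimal setting, so the task becomes transferring this order back to $F$ itself as a definable field order.

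A plausible route is to exploit the full strength of VC-minimality at the level of the directed family $\Psi$. Since $T$ is in particular dp-minimal, classification-type results for dp-minimal fields should narrow $\mathfrak{F}$ to one of: algebraically closed, real closed, or a field carrying a definable nontrivial Henselian valuation. The stable case eliminates algebraically closed from the unstable branch, so one is left to rule out, or completely analyze, the Henselian case. For this, Lemma \ref{Lem_SIwCO} would let one push VC-minimality (or at least dp-smallness) down simultaneously to the residue field and the value group: by Theorem \ref{thm_MainResults2}~(1) the value group must be divisible ordered abelian, and by applying the stable-case step (or induction) to the residue field, one should be able to match $\mathfrak{F}$ against the real closed valued field picture afforded by Corollary \ref{Cor_CoVF}, then argue that unstability of the pure field forces the field ordering to be definable.

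The main obstacle I foresee is precisely this Henselian subcase: showing that a VC-minimal (and not merely convexly orderable or dp-small) unstable field cannot carry a nontrivial definable Henselian valuation with algebraically closed residue field, i.e., that the ``swiss cheese'' directed families characteristic of ACVF are incompatible with unstability of the pure field reduct in the VC-minimal framework. A finer combinatorial analysis of $\Psi$ on the home sort of a field, together with standard constraints ruling out infinitely many Artin--Schreier extensions and controlling definable additive and multiplicative subgroups, will probably be required. This step has no direct analogue in the present paper and is where I expect genuinely new ideas to be needed; it is also what separates the conjecture from what the current techniques cleanly yield.
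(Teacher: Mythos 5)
This statement is explicitly an open conjecture in the paper (Conjecture~\ref{Conj_VCFields}); the paper neither proves it nor claims to. Your submission is accordingly not a proof either, and you are candid about that, so the right question is whether your account of what \emph{is} known and where the gap lies matches the paper. It does, quite closely.

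Your handling of the easy direction and the stable case is correct and is exactly the evidence the paper itself assembles: ACF and RCF are VC-minimal (Remark~\ref{Rem_ACFVCMin} and o-minimality), and a stable VC-minimal field is dp-minimal, hence algebraically closed by Theorem~\ref{Thm_dpStable}. You also pinpoint the same bottleneck the paper flags at the end of Section~\ref{Sec_VCMinF}: in the unstable case, Theorem~3.5 of \cite{gl} produces an infinite interpretable linear order on some definable quotient of $F^n$, and the open problem is to convert that into a definable \emph{field} ordering on $F$ itself so that Theorem~\ref{Thm_CoRC} can be invoked. That is precisely the sentence ``The gap in proving this conjecture is going from an infinite interpretable linear order \dots to a field ordering on $F$.''

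Where you go beyond the paper is the speculative middle paragraph: you propose to route through a trichotomy for dp-minimal fields (algebraically closed, real closed, or carrying a definable nontrivial Henselian valuation) and then rule out or analyze the Henselian branch using Lemma~\ref{Lem_SIwCO}, Theorem~\ref{thm_MainResults2}~(1), and Corollary~\ref{Cor_CoVF}. Be aware that no such trichotomy is established in this paper, and Corollary~\ref{Cor_CoVF} presupposes an \emph{ordered} Henselian valued field, so it does not apply until a definable ordering has already been produced --- which is the very thing at issue. Moreover, ACVF itself is unstable, VC-minimal, and carries a nontrivial Henselian valuation with algebraically closed residue field, so the Henselian subcase cannot simply be excluded; any argument must distinguish the $|$-expansion from the pure field reduct. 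Your own closing paragraph essentially concedes this, so there is no error of reasoning here, only (appropriately labeled) speculation. In short: you have correctly reconstructed the state of the problem as the paper presents it, and the extra machinery you gesture at does not yet close the gap.
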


This would, in turn, give a nice characterization of VC-minimal Henselian valued fields.

The gap in proving this conjecture is going from an infinite interpretable linear order (on a definable subset of $F^n$ for $n$ possibly much larger than $1$), to a field ordering on $F$.

\section*{Acknowledgements}

The author thanks Kobi Peterzil, Pierre Simon, and Charles Steinhorn for listening to various forms of the VC-minimal ordered fields argument and giving helpful suggestions.  We also thank Alfred Dolich, Joseph Flenner, Cameron Hill, and Sergei Starchenko for ideas around dp-smallness.

\begin{bibdiv}
\begin{biblist}

\bib{adl}{article}{
   author={Adler, Hans},
   title={Theories controlled by formulas of Vapnik-Chervonenkis codimension 1},
   journal={Preprint},
   date={2008}
}

\bib{ADHMS1}{article}{
   author={Aschenbrenner, A.},
   author={Dolich, A.},
   author={Haskell, D.},
   author={MacPherson, H.D.},
   author={Starchenko, S.},
   title={Vapnik-Chervonenkis density in some theories without the independence property, I},
   date={2011},
   note={preprint}
}

\bib{ADHMS2}{article}{
   author={Aschenbrenner, A.},
   author={Dolich, A.},
   author={Haskell, D.},
   author={MacPherson, H.D.},
   author={Starchenko, S.},
   title={Vapnik-Chervonenkis density in some theories without the independence property, II},
   journal={Notre Dame Journal of Formal Logic},
   date={2011},
   note={to appear}
}

\bib{cs}{article}{
   author={Cotter, Sarah},
   author={Starchenko, Sergei},
   title={Forking in VC-minimal theories},
   journal={J. Symbolic Logic},
   volume={77},
   number={4},
   pages={1257--1271},
   date={2012}
}

\bib{dgl}{article}{
   author={Dolich, Alfred},
   author={Goodrick, John},
   author={Lippel, David},
   title={Dp-minimality: basic facts and examples},
   journal={Notre Dame Journal of Formal Logic},
   volume={52},
   number={3},
   date={2011},
   pages={267--288}
}

\bib{fg}{article}{
   author={Flenner, Joseph},
   author={Guingona, Vincent},
   title={Canonical forests in directed families},
   journal={Proc. Am. Math. Soc.},
   date={2013},
   note={to appear}
}

\bib{fg2}{article}{
   author={Flenner, Joseph},
   author={Guingona, Vincent},
   title={Convexly orderable groups and valued fields},
   date={2012},
   note={preprint}
}

\bib{goodrick}{article}{
   author={Goodrick, John},
   title={A monotonicity theorem for dp-minimal densely ordered groups},
   journal={J. Symbolic Logic},
   volume={75},
   number={1},
   date={2010},
   pages={221--238}
}

\bib{gl}{article}{
   author={Guingona, Vincent},
   author={Laskowski, M.C.},
   title={On VC-Minimal Theories and Variants},
   journal={Archive for Mathematical Logic},
   date={2011},
   note={to appear}
}

\bib{kp}{article}{
   author={Krupinski, Krzysztof},
   author={Pillay, Anand},
   title={On stable fields and weight},
   journal={Journal of the Inst. of Math. Jussieu},
   year={2011},
   volume={10},
   number={2},
   pages={349--358}
}

\bib{mmv}{article}{
   author={Macintyre, Angus},
   author={McKenna, Kenneth},
   author={Van den Dries, Lou},
   title={Elimination of quantifiers in algebraic structures},
   journal={Advances in Mathematics},
   volume={47},
   date={1983},
   pages={74--87}
}

\bib{mms}{article}{
   author={Marker, David},
   author={MacPherson, Dougald},
   author={Steinhorn, Charles},
   title={Weakly o-minimal structures and real closed fields},
   journal={Transactions of the American Mathematical Society},
   volume={352},
   number={12},
   date={2000},
   pages={5435--5483}
}

\bib{ou}{article}{
   author={Onshuus, Alf},
   author={Usvyatsov, Alexander},
   title={On dp-minimality, strong dependence, and weight},
   journal={J. Symbolic Logic},
   volume={76},
   number={3},
   date={2011},
   pages={737--758}
}

\bib{riben}{article}{
   author={Ribenboim, P.},
   title={Theorie des valuations},
   journal={Les Presses de l'Universite de Montreal},
   date={1964},
   volume={40},
   pages={2670}
}

\bib{simon}{article}{
   author={Simon, Pierre},
   title={On dp-minimal ordered structures},
   journal={J. Symbolic Logic},
   volume={76},
   number={2},
   date={2011},
   pages={448--460}
}

\bib{Wood}{article}{
   author={Wood, Carol},
   title={Notes on the stability of separably closed fields},
   journal={J. Symbolic Logic},
   volume={44},
   date={1979},
   pages={412--416}
}

\end{biblist}
\end{bibdiv}

\end{document}